\newtheorem{Thm}{Theorem}[section]
\newtheorem{Pro}[Thm]{Proposition}
\newtheorem{lem}[Thm]{Lemma}
\newtheorem{deft}[Thm]{Definition}
\newtheorem{rem}[Thm]{Remark}
\newcommand{\AlignFootnote}[1]{%
    \ifmeasuring@
    \else
        \footnote{#1}%
    \fi
}
\numberwithin{equation}{section}
\newcommand{\cA}{\mathcal{A}}
\renewcommand{\hat}{\widehat}
\renewcommand{\tilde}{\widetilde}
\def\bs{\boldsymbol}
\def\a1s{a_1,\cdots, a_s}
\def\a{\alpha}
\def\aa{\mathcal A}
\def\adot{\dot{\alpha}}
\def\bdot{\dot{\beta}}
\def\andd{\quad\hbox{and}\quad}
\def\b{\beta}
\def\bl4{B_{\ell\geq4}}
\def\bbbc{{\mathbb C}}
\def\d{\delta}
\def\D{\Delta}
\def\sce{\mathscr{E}}
\def\scf{\mathscr{F}}
\def\gG{\mathscr{G}}
\def\gG{{\mathcal G}}
\def\fg{\mathfrak{g}}
\def\scg{\mathscr{G}}
\def\sch{\mathscr{H}}
\def\hh{{\mathcal H}}
\def\fh{\mathfrak{h}}
\def\sch{\mathscr{H}}
\def\ii{\mathcal{I}}
\def\jj{\mathcal{J}}
\def\fk{\mathfrak{k}}
\def\lam{\lambda}
\def\LL{\mathcal{L}}
\def\scl{\mathscr{L}}
\def\ep{\epsilon}
\def\fm{(\cdot,\cdot)}
\def\bbbr{{\mathbb R}}
\def\supp{\hbox{\rm supp}}
\def\1k{\frac{1}{k}}
\def\op{\oplus}
\def\ot{\otimes}
\def\sub{\subseteq}
\def\sg{\sigma}
\def\pf{\noindent{\bf Proof. }}
\def\sspan{\hbox{\rm span}}
\def\ft{\mathfrak{t}}
\def\bbbz{{\mathbb Z}}
\def\1il{1\leq i\leq\ell}
\def\red{\textcolor[rgb]{1.00,0.00,0.00}}
\renewcommand{\hat}{\widehat}
\def\bs{\boldsymbol}
\newcommand{\Bigop}[2]{\raisebox{0.2ex}{\scalebox{0.7}{$\displaystyle \bigoplus_{#1}^{#2}\;$}}}
\newcommand{\summ}[1]{\raisebox{0.1ex}{\scalebox{1}{$\displaystyle \sum_{#1}\;$}}}
\begin{document}
\title{Quasi-integrable modules over affine Lie superalgebras (Critical level)}
\thanks{$^\star$ Corresponding author.}
\thanks{2020 Mathematics Subject Classification: 17B10, 17B67.}
\thanks{Key Words: Finite weight modules, 
	twisted affine Lie superalgebras}
%\thanks{$^\ast$ The third author was supported by a grant from IPM ...}
\thanks{The third author's research was in part supported by a grant from IPM (No. 1403170414). 
This work is based upon research funded by Iran National Science Foundation INSF (No. 4001480).}
\maketitle
\pagestyle{myheadings}

\markboth{}{}
\centerline{Asghar Daneshvar$^{\rm a}$, Hajar Kiamehr$^{\rm a}$, Malihe Yousofzadeh$^{\rm a,b, \star}$}
	
\centerline{$^{\rm a}$ {\scalebox{0.65} {Department of Pure Mathematics, Faculty of Mathematics and Statistics, University of Isfahan,}}}
\centerline{{\scalebox{0.65} { P.O.Box 81746-73441, Isfahan, Iran,}
}}
\centerline{$^{\rm b}${\scalebox{0.65} { School of Mathematics
 Institute for Research in
	Fundamental Sciences (IPM), }}}
\centerline{{\scalebox{0.65} {P.O. Box: 19395-5746, Tehran, Iran.}
}}
\centerline{{\scalebox{0.65} {Email addresses: a.daneshvar@ipm.ir (Asghar Daneshvar), hkiamehr@sci.ui.ac.ir (Hajar Kiamehr),}}}	
	\centerline{{\scalebox{0.65} {		 ma.yousofzadeh@sci.ui.ac.ir \& ma.yousofzadeh@ipm.ir (Malihe Yousofzadeh).			
		}}}

\begin{abstract} 
Representation theory of Lie (super)algebras has attracted significant research interest  for many years,  especially due to  its applications in theoretical physics; in this regard, the representation theory of affine Lie (super)algebras is of central importance. To characterize simple modules over affine Lie (super)algebras, it is necessary to study the  cases of nonzero and critical levels separately.   Although a vast amount of research has been done on the representation theory of affine Lie (super)algebras $\LL$, investigations concerning  general modules at the critical level remain limited. In all existing studies, the characterization of the modules under consideration is reduced to the characterization of modules over some subalgebras of $\LL$. Depending on the structure of the original modules, these subalgebras -and the corresponding  modules- have different  natures some of which are already known, while others need to be studied separately. In this paper, we give a complete characterization of  the modules over specific subalgebras $\gG$ of a   twisted affine Lie superalgebra $\LL$  that arise in the study of general zero level  simple finite weight $\LL$-modules.   
In particular, in the special case that $\gG=\LL,$  we obtain a complete characterization of quasi-integrable $\LL$-modules of level zero. 
\end{abstract}

\section{Introduction}
The study of Lie (super)algebras  and their representations 
has attracted significant research attention, especially because of  their  applications in various domains of mathematics and physics; {(see for instance \cite{Freund} and \cite{KRW})}.  In this regard, affine Lie (super)algebras are important  tools  in number theory,  group theory, knot theory, and quantum field theory; see e.g., {\cite{CMY,Kac,KW2}.  Affine Lie (super)algebras were  introduced and classified by {V.~Kac and R.~Moody in non-super case} and by  J.W.~Van de Leur in the super case; according to their classification, affine Lie (super)algebras  are divided into untwisted and twisted types, see  \cite{Kac1, van-thes} (see also \cite[Appendix A]{you8}).

An affine Lie (super)algebra $\LL$ contains a Cartan subalgebra  $\hh$ with respect to which $\LL$ has a root space decomposition. The first step toward the classification of $\LL$-modules is dealing with  finite weight $\LL$-modules, i.e.,  $\LL$-modules having a weight space decomposition with respect to $\hh$ with finite weight  multiplicities.  An affine Lie (super)algebra contains a canonical central element; this central element acts  on a  simple finite {weight module $V$
as a scalar called the  \it level} of $V$. 

There has been a vast amount of research on finite weight modules over affine Lie (super)algebras (see \cite{you8} and the references therein); in this regard, the parabolic induction is a  powerful tool to characterize these modules: In general, a parabolic subset of the root system $\Phi$ of a Lie superalgebra $\fk$, having a root space decomposition, is a subset {$P\subseteq \Phi$}  satisfying 
\[{{\Phi}=P\cup -P\andd (P+P)\cap {\Phi}\sub P} .\] This gives the decomposition $\Phi=(P\setminus -P)\cup(P\cap-P)\cup (-P\setminus P)$ for $\Phi$ and respectively the decomposition 
$\fk=\fk^+_{_P}\op\fk^\circ_{_P}\op\fk^-_{_P}$ for $\fk.$ A parabolically induced $\fk$-module $V$ is the unique simple quotient of $U(\fk)\ot \Omega,$ in which $U$ stands for the universal enveloping algebra,  $\Omega$ is a simple $\fk^\circ_{_P}$-module considered as a module over $\fk^+_{_P}\op\fk^\circ_{_P}$ with trivial action of $\fk^+_{_P}$ and  the tensor product is taken  on the universal enveloping algebra of  $\fk^+_{_P}\op\fk^\circ_{_P}$; see \S~\ref{sect-2-1} for the details.  A simple $\fk$-module which is not parabolically induced  is called cuspidal. This in turn implies that the characterization of parabolically induced modules are reduced to the characterization of cuspidal modules.

To prove a  simple finite weight  $\fk$-module $V$  is parabolically induced,  one encounters three types of difficulties. The first one is finding a parabolic subset $P$  for which $\fk^+_{_P}$-invariants  $\Omega:=V^{\fk^+_{_P}}$ of $V$, namely, those  vectors of $V$  annihilated by the root vectors corresponding to $\fk^+_{_P}$, is nonzero. The second  is to ensure that the mentioned tensor product $U(\fk)\ot \Omega$ possesses a unique simple quotient. The most delicate issue, however,  is understanding the structure of $\Omega$ itself to allow the induction argument to proceed.

In \cite{Fer} and \cite{D-M-P}, the authors show that a finite weight module $V$ over  a finite dimensional  Lie superalgebra whose zero part is a  reductive Lie algebra is parabolically induced from  a cuspidal module; the parabolic subset getting involved in this setting  is based on the intersection of  the set of real roots whose corresponding root vectors acting locally nilpotently on $V$ with  the set of real roots  $\a$ for which nonzero  root vectors corresponding to $-\a$ act injectively on $V.$  In the affine case, the nature of the root system makes it more difficult  to identify the appropriate parabolic subsets required for such an induction. More precisely, the root system of an affine Lie superalgebra is decomposed into three parts: 1) real roots, 2) imaginary roots and 3) non-singular roots, which are respectively, 1) those roots  which are not self-orthogonal, 2) those  which are orthogonal to all  roots and 3) those roots which are self-orthogonal but not orthogonal to the entire root system. 
The set of imaginary roots is a free abelian group of rank 1, say, e.g., $\bbbz\d$ and there is a finite set $\dot R$ with a decomposition  $\dot R_0\cup \dot R_1,$ in which $\dot R_0$ is a finite root system, such that the root system $R$ of $\LL$ is a subset of $\dot R+\bbbz\d.$ For each $\dot \a\in \dot R,$ $R\cap (\dot\a+\bbbz\d)$ is of the form 
\begin{equation}\tag{$\dag$}\label{dag}
\dot\a+k_{\dot\a}\d+\bbbz r_{\dot\a}\d.
\end{equation}
for some $r_{\dot\a}\in\{1,2,4\}$ and $0\leq k_{\dot\a}<r_{\dot\a}.$ To study modules over an affine Lie superalgebra $\LL$, there exists a variety of issues that require to be addressed. Firstly, if the level of the module under consideration is zero or not; secondly, it is essential to understand how root vectors act on the module and interact with one another.  Regarding the second issue, one knows that real root vectors act on a  simple finite weight module either injectively or locally nilpotently which allows us to call  the  corresponding roots respectively  injective or locally nilpotent; these injective and locally nilpotent actions   are  well-behaved. More precisely,  if both injective and locally nilpotent roots appear in the  sequence (\ref{dag}) corresponding to a real root $\a,$ there exists a root $\gamma$ in this sequence such that all injective roots lie on one side of $\gamma$ while all locally nilpotent roots lie on the other side of $\gamma.$ We call such real roots $\a$ hybrid; if a real root is not hybrid, it means that either all roots in the mentioned sequence are locally-nilpotent or all are injective. We call such roots full-locally nilpotent and full-injective respectively. This, then, helps us to divide  the category  of finite weight $\LL$-modules into several subcategories. The nature of simple objects in each subcategory, offers an exclusive approach  to get a characterization of them.  
 
 In the cases  that either all real roots are hybrid or the level is nonzero, using a one-step parabolic induction, the characterization is reduced to the characterization of simple objects of a class of  well-studied finite weight modules over finite dimensional simple Lie superalgebras, but in the zero level, the situation is completely different; we  know from \cite{you13} that  in this case, we can reduce the characterization of $\LL$-modules  to the characterization   of modules $M$ over infinite dimensional subalgebras $\gG$ of $\LL,$ whose   real roots are either  full-locally nilpotent or hybrid.  Such modules are kind of generalization of $\LL$-modules called quasi-integrable modules; see Definition~\ref{quasi-} for the details.  
 
 In this paper, we study such $\gG$-modules and provide a complete characterization of them; in particular, we obtain a full characterization of zero level quasi-integrable 
$\LL$-modules. Our approach relies on a sequence of three successive parabolic inductions.
 The subalgebra $\gG$ is a sum of two subalgebras $\gG(1)$ and $\gG(2),$ moreover,  a $\gG$-module  $V$, as above,  is simultaneously an integrable $\gG(1)$-module\footnote{That is, all real roots are locally nilpotent.} and a hybrid $\gG(2)$-module\footnote{It means all real roots are hybrid.}.  Our first parabolic induction is obtained using integrability of $\gG(1)$-module $V.$  The most delicate step is the second one; at this stage, we encounter  a simple module $W$  over an extension of $\gG(2).$ This extension has a nonzero $\bbbz$-graded center. One of the most technical parts of our paper is proving that the nonzero homogenous elements  of the center, corresponding to the nonzero integer numbers, 
 act  trivially on $W$; this helps us to  define our second parabolic subset $P$ whose corresponding parabolic induction gives us a  simple finite weight module over a finite dimensional Lie superalgebra. Then, we use the results of \cite{D-M-P} and get our third parabolic induction which in fact completes the characterization.
 
 The layout of our paper is as follows: After this introduction, we have \S~2  which collects the necessary preliminaries for our main theorem and comprises three subsections: the first provides  some general information of the theory, the second addresses  some facts on representation theory of affine Lie algebras and the third one is exclusively devoted to the twisted affine Lie superalgebras, along with some technical facts we  need for our main theorem. We conclude the paper with \S~3 in which we prove our main theorem. 
\section{Preliminaries}\label{sect-2}
\subsection{Generic Information}\label{sect-2-1}
 Throughout the paper, unless otherwise mentioned, the underlying field, for all vector spaces, is the field of complex numbers $\bbbc$. Also, we denote  the dual space of a vector space $V$  by $V^*.$

Suppose that  $\fk=\fk_0\op\fk_1$ ($\bbbz_2=\{0,1\}$)  is  a Lie superalgebra equipped with an even  nondegenerate supersymmetric invariant  bilinear form $\fm$ and 
 $\fh$ is a finite dimensional subalgebra of the even part $\fk_0$ of $\fk$. A superspace  ${V}={V}_0\op {V}_1$ is called  an {\it {$\fh$}-weight  $\fk$-module} (or simply a weight $\fk$-module if there is no ambiguity) if
  \begin{itemize}
\item [(1)] $[x,y]v=x(yv)-(-1)^{\mid x\mid\mid y\mid}y(xv)$ for all $v\in {V}$ and  $x,y\in\fk_0\cup\fk_1$,
\item [(2)] $\fk_i {V}_j\sub {V}_{i+j }$ for all $i,j\in\{0,1\}$, 
 \item[(3)] ${V}=\op_{\lam\in \fh^\ast}{V}^\lam$ with
 ${V}^\lam:=\{v\in {V}\mid hv=\lam(h)v\;\;(h\in\fh)\}$ for each $\lam\in \fh^*.$  	
\end{itemize}

In this case, an element $\lam$ of 
$\supp({V}):=\{\lam\in \fh^*\mid {V}^\lam\neq \{0\}\},$ referred to as the {\it support} of $V,$ is called a {\it weight} of $V$, and the corresponding ${V}^\lam$ is called a {\it weight space}. Elements of a weight space are called {\it weight vectors}. If all weight spaces are  finite dimensional, then the module ${V}$ is called a {\it finite weight module}.

 If $\fk$ is an {$\fh$}-weight module via the adjoint representation, we say that $\fk$  has   a {\it root space decomposition} with respect to $\fh$; the set $\Phi$ of weights of $\fk$ is said to be the {\it root system} and weights, weight vectors and weight spaces are called   respectively {\it roots}, {\it root vectors} and  {\it root spaces}. We set 
 \[\Phi_i=\{\a\in \Phi\mid \fk^\a\cap\fk_i\neq \{0\}\}\quad (i=0,1).\]
From now until the end of this subsection, we   assume  $\fk$ has  a root space decomposition with respect to $\fh$ with the root system ${\Phi}$ such that   $\fk^0=\fh$. Therefore, the form on $\fh$ naturally 
 induces  a  nondegenerate symmetric bilinear form on the dual space $\fh^*$ of  $\fh$; we denote this new form   again by $\fm$.
For a subset $S$ of $\Phi$, define
 \begin{equation}  \label{decom-1}
 	\begin{array}{ll}
 		\hbox{\footnotesize $S_{re} := \{\alpha \in S \mid (\alpha,\alpha) \neq 0\},$}&
 		\hbox{\footnotesize $S_{im}: =\{\a\in S\mid(\a,\Phi)=\{0\}\}$},\\
 		\hbox{\footnotesize $S_{ns} :=\{\a\in S\setminus S_{im}\mid (\alpha,\alpha) = 0\},$}&
 		\hbox{\footnotesize $S^\times := S \setminus {S_{im}}.$}
 	\end{array}
 \end{equation}
 Elements of $S_{im}$ (resp. $S_{re}$ and $S_{ns}$) are called {\it imaginary roots} (resp. {\it real roots} and {\it nonsingular roots}).
 Next assume $V=\op_{\lam\in \fh^*}V^\lam$ is an   {$\fh$-weight $\fk$-module}  and $S\sub \Phi,$
 \begin{equation}\label{in-nil}
 	\parbox{4.9in}{we denote by  ${S}^{ln}({V})$ (resp. ${S}^{in}({V})$),   the set of all real roots $\a\in S$  for which $0\neq x\in\fk^\a$ acts on ${V}$ locally nilpotently (resp. injectively).}
 \end{equation}
If there is no ambiguity, we simply use  $S^\star$ in place of  $S^\star(V)$ for $\star  = in, ln$.
We set\footnote{We use $\#$ to indicate the cardinal number of a set.}
\begin{equation}\label{BC}
 	\begin{array}{l}
B_V:=\{\a\in \hbox{span}_\bbbz \Phi\mid  \#\{k\in\bbbz^{>0}\mid \lam+k\a\in \supp(V)\}<\infty \quad(\forall\lam\in\supp(V)\},\\
C_V:=\{\a\in \hbox{span}_\bbbz \Phi\mid   \a+\supp(V)\sub\supp(V)\}.
 	\end{array}
 \end{equation}
 We say that the  $\fh$-weight $\fk$-module $V$ has  {\it shadow} if
 \begin{itemize}
\item[\rm\bf (s1)]
${\Phi}_{re}=\{\a\in {\Phi}\mid (\a,\a)\neq 0\}={\Phi}^{in}(V)\cup {\Phi}^{ln}(V),$
 	
\item[\rm\bf (s2)] ${\Phi}^{ln}(V)=B_V\cap {\Phi}_{re}$ and ${\Phi}^{in}(V)=C_V\cap {\Phi}_{re}.$
 \end{itemize}

\subsubsection{Parabolically Induced Modules.}\label{par-subsets} For a subset $S$ of $\Phi$ and a functional  $\boldsymbol{f}:\hbox{span}_\bbbr S\longrightarrow \bbbr$,
 %
 %For a subset  $S$ of ${\Phi}$ and a linear functional  $\boldsymbol{f}:\hbox{span}_\bbbr S\longrightarrow \bbbr$,
 the decomposition
 \begin{equation*}\label{tri-dec}
 	S=S^+\cup S^\circ\cup S^-
 \end{equation*}
 where
 $$\hbox{\small $S^{\pm}=S^{\pm}_{\bs{f}}:={\{\a\in S\mid \pm\boldsymbol{f}(\a)>0\}}\andd S^\circ=S^\circ_{\bs{f}}:=\{\a\in S\mid \boldsymbol{f}(\a)=0\}$}$$
 %is called
 is said to be a {\it triangular decomposition} of $S$.
 It is called {\it trivial} if $S=S^\circ_{\bs{f}}.$

A subset $P$  of the root system $\Phi$  is called a {\it parabolic subset} of $\Phi$ if 
 $${\Phi}=P\cup -P\andd (P+P)\cap {\Phi}\sub P.$$ For a parabolic subset $P$ of $\Phi,$ we  have the decomposition
\[\fk=\fk^+_{_P}\op\fk^\circ_{_P}\op\fk^-_{_P}\] where
 $$\hbox{\small $\fk^\circ_{_P}:=\Bigop{\a\in P\cap -P}{}\fk^\a,\; \fk^+_{_P}:=\Bigop{\a\in P\setminus-P}{}\fk^\a\andd \fk^-_{_P}:=\Bigop{\a\in - P\setminus P}{}\fk^\a.$}$$
 Set   $\mathfrak{p}:=\fk^\circ_{_P}\op\fk^+_{_P}$. Each simple $\fk^\circ_{_P}$-module $N$ is a simple module over $\mathfrak{p}$ with the trivial action of $\fk^+.$ Then  $$\tilde N:=U(\fk)\ot_{U(\mathfrak{p})}N$$ is a $\fk$-module; here  $U(\fk)$ and $U(\mathfrak{p})$ denote, respectively,  the universal enveloping algebras of $\fk$ and $\mathfrak{p}.$ If the  $\fk$-module  $\tilde N$ contains a  unique maximal submodule $Z$ {intersecting} $N$ trivially, the quotient module $${\rm Ind}^{\fk}_{P}(N):=\tilde N/Z$$ is called  a {\it parabolically induced} module if $P\neq \Phi$. A simple  $\fk$-module  which is not parabolically induced is called {\it cuspidal}.

If  $\lam$ is a functional on the $\bbbr$-linear span of $\Phi,$  $P_\lam:=\Phi^+\cup \Phi^\circ$ is a parabolic subset of $\Phi.$ Moreover, if $\mu $ is a functional  on the $\bbbr$-linear span of $\Phi^\circ,$ we have a triangular decomposition $\Phi^\circ=\Phi^{\circ,+}\cup \Phi^{\circ,\circ}\cup \Phi^{\circ,-}$ for $\Phi^\circ$ and  $P_{\lam,\mu}:=\Phi^+\cup \Phi^{\circ,+}\cup \Phi^{\circ,\circ}$ is also a parabolic subset of $R.$ For $N$ as above, we  denote  ${\rm Ind}^{\fk}_{P_{\lam,\mu}}(N)$ simply by  
 ${\rm Ind}^{\fk}_{\lam,\mu}(N)$. We note that $P_{\lam,0}=P_{\lam}.$ In this case,  we denote $\fk_{_P}^\circ$ and $\fk_{_P}^\pm$  
 by $\fk_{_{\lam}}^\circ$ and $\fk_{_{\lam}}^\pm$  respectively and denote  ${\rm Ind}^{\fk}_{\lam,\mu}(N)$ by ${\rm Ind}^{\fk}_{\lam}(N)$. The following proposition is known in the literature; see e.g \cite[Proposition 3.3]{you8}: 
\begin{Pro}\label{ind}
 Suppose that $\lam$ and $\mu$ are as above and set $P:=P_{\lam,\mu}$. If $V$ is a simple  finite weight $\fk$-module with $$V^{\fk_{_P}^+}:=\{v\in V\mid \fk_{_P}^+ v=\{0\}\}\neq \{0\},$$ then $V^{\fk_{_P}^+}$ is a  simple finite weight $\fk_{_P}^\circ$-module and  $V\simeq {\rm Ind}_{\lam,\mu}^\fk(V^{\fk_{_P}^+}).$
\end{Pro}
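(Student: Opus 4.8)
The plan is to establish both assertions of Proposition~\ref{ind} by understanding the structure of the induced module $\tilde N = U(\fk)\ot_{U(\fp)} N$ where $N := V^{\fk_{_P}^+}$, and then identifying $V$ with its unique simple quotient. First I would check that $N$ is a finite weight $\fk_{_P}^\circ$-module: it is a weight module because $\fk_{_P}^+$ is a sum of root spaces and hence $V^{\fk_{_P}^+}$ is $\fh$-invariant, so it decomposes into weight spaces, each of which is a subspace of the corresponding finite-dimensional weight space of $V$. That $N\neq\{0\}$ is the hypothesis. The key structural input is the triangular decomposition $\fk = \fk_{_P}^- \op \fk_{_P}^\circ \op \fk_{_P}^+$, which by the PBW theorem gives $U(\fk) \cong U(\fk_{_P}^-)\ot U(\fp)$ as right $U(\fp)$-modules, and therefore $\tilde N \cong U(\fk_{_P}^-)\ot_\bbbc N$ as vector spaces (indeed as $\fk_{_P}^\circ$-modules). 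From this one reads off that the $\fh$-weights of $\tilde N$ lie in $\supp(N) - \sum_{\a\in P\setminus -P}\bbbz^{\geq 0}\a$, so $N$ is exactly the ``top'' of $\tilde N$ and $\tilde N/(\text{any submodule meeting } N \text{ trivially})$ still has $N$ sitting inside it.

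Next I would prove simplicity of $N$ as a $\fk_{_P}^\circ$-module. Suppose $0\neq N'\subsetneq N$ is a proper $\fk_{_P}^\circ$-submodule. Using the $\fk$-module map $\tilde N \to V$ (the one sending $1\ot n \mapsto n$, which exists by the universal property since $N\subseteq V$ and $\fk_{_P}^+$ kills $N$), I would consider the $\fk$-submodule $M$ of $V$ generated by $N'$. By PBW, $M = U(\fk_{_P}^-)U(\fk_{_P}^\circ) N' = U(\fk_{_P}^-) N'$, and intersecting with the top weights — those weights $\lam$ of $V$ for which $\lam + \a\notin\supp(V)$ for all $\a\in P\setminus -P$, equivalently the weights of $N$ — one should get $M\cap N = N'$: indeed any weight of $U(\fk_{_P}^-)N'$ that is not already a weight of $N'$ is obtained by lowering, hence is strictly below (in the $\bbbz^{\geq0}$-cone sense) and cannot be a weight of $N$. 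Since $V$ is simple, $M = V$, so $N = M\cap N = N'$, contradiction. Hence $N$ is simple. (This also handles finite-dimensionality of weight spaces of $N$ once more, since $N\subseteq V$.)

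Finally, for the isomorphism $V\simeq {\rm Ind}_{\lam,\mu}^\fk(N)$: by the universal property of $\tilde N$ there is a surjective $\fk$-homomorphism $\pi\colon \tilde N \twoheadrightarrow V$ restricting to the identity on $N$ (since $V$ is generated by $N$: the submodule $U(\fk)N$ is nonzero and $V$ is simple). Let $K = \ker\pi$. Then $K$ is a $\fk$-submodule of $\tilde N$ with $K\cap N = \{0\}$. I would argue $K$ is the \emph{unique maximal} such submodule: if $K'$ is any submodule with $K'\cap N =\{0\}$, then the image $\pi(K')\subseteq V$ is a submodule, and its intersection with $N$ is contained in $\pi(K'\cap(\text{preimage of }N))$; a short argument using the weight grading and $K'\cap N=\{0\}$ shows $\pi(K') \cap N = \{0\}$ inside $V$, forcing $\pi(K') = \{0\}$ by simplicity of $V$ (a nonzero submodule of the simple module $V$ must contain weight vectors of maximal height, i.e.\ must meet $N$ — here is where one uses that every nonzero submodule of $V$ has nonempty intersection with $N = V^{\fk_{_P}^+}$, which follows from $\fk_{_P}^+$ acting locally nilpotently on finite weight modules with the relevant cone finiteness, or more simply from a maximality-of-weight argument within the finitely many relevant weights). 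Hence $K'\subseteq K$, so $K$ is the unique maximal submodule intersecting $N$ trivially, which is exactly the submodule called $Z$ in the definition of ${\rm Ind}$, and therefore $V \cong \tilde N/K = {\rm Ind}_{\lam,\mu}^\fk(N)$.

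The main obstacle I anticipate is the claim that every nonzero $\fk$-submodule of the simple finite weight module $V$ meets $N = V^{\fk_{_P}^+}$ nontrivially — i.e.\ that $\fk_{_P}^+$-invariants are detected inside every submodule. In the classical finite-dimensional or affine setting this is handled by a highest-weight / maximality argument, but here $\fk_{_P}^+$ need not act locally nilpotently a priori on all of $V$, so one must be careful: the cleanest route is to use that $P = P_{\lam,\mu}$ comes from the linear functional, so $P\setminus -P = \Phi^+_{\bs f}$ sits in an open half-space, giving a genuine $\bbbr$-grading of $\fk$ by $\bs f$-values; then any nonzero submodule, being a finite weight module with support in $\supp(V)$, has a weight of maximal $\bs f$-value among \emph{its own} support, and a vector of that weight is necessarily killed by $\fk_{_P}^+$ — lying in $N$. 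I would need to make sure this maximal-value weight exists, which uses that $\bs f(\supp(V))$ need not be bounded above in general, but after restricting to the submodule generated by a single weight vector $v$, the support lies in $\supp(v) - \bbbz^{\geq0}(P\setminus-P) \subseteq \{\mu : \bs f(\mu)\leq \bs f(\text{wt }v)\}$, so the $\bs f$-values \emph{are} bounded above on that submodule and a maximum is attained. This reduction to cyclic submodules is the technical heart and I would present it carefully.
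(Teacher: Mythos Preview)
The paper does not prove this proposition; it only cites \cite[Proposition~3.3]{you8}. Your outline is the standard argument and is largely correct, but there is one genuine gap that you invoke twice without justification.

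The unproved claim is that the weights of $N=V^{\fk_{_P}^+}$ are exactly the ``top'' weights of $V$, i.e., those $\nu$ with $\nu+\a\notin\supp(V)$ for all $\a\in P\setminus-P$. You assert this as ``equivalently the weights of $N$'' in the simplicity step, and you need it again (as $\tilde N^\nu=1\otimes N^\nu$ for $\nu\in\supp(N)$) in your ``short argument using the weight grading'' for the maximality of $K$. A priori $N$ could have weights at different $(\lam,\mu)$-heights, and then a weight obtained by lowering from $N'$ could perfectly well lie in $\supp(N)$. The missing lemma is that any two weights $\nu,\nu'\in\supp(N)$ satisfy $\lam(\nu-\nu')=0=\mu(\nu-\nu')$; it is proved by a symmetry argument: for nonzero $n\in N^\nu,\,n'\in N^{\nu'}$, simplicity of $V$ gives $n'\in U(\fk)n=U(\fk_{_P}^-)U(\fk_{_P}^\circ)n$ (using $\fk_{_P}^+n=0$ and that $U(\fk_{_P}^+)$ has no nonscalar weight-zero part), whence $(\lam,\mu)(\nu'-\nu)\leq(0,0)$ lexicographically; swapping $n,n'$ forces equality, and then each $U(\fk_{_P}^-)$-factor in the expression must be scalar, giving $n'\in U(\fk_{_P}^\circ)n$. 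Once you have this, both of your arguments go through cleanly.

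Two minor corrections. Your anticipated obstacle in the last paragraph is misdirected: since $V$ is simple, ``every nonzero submodule of $V$ meets $N$'' is vacuous (the only such submodule is $V$); the real work happens inside $\tilde N$, not $V$, and is exactly the lemma above. Also, your claim that $\supp(U(\fk)v)\sub\hbox{wt}(v)-\bbbz^{\geq0}(P\setminus-P)$ is false for general $v$ (it holds only for $v\in N$), and $P_{\lam,\mu}$ is governed by the pair $(\lam,\mu)$ with lexicographic order on $\bbbr^2$, not by a single functional in an open half-space---though this does not affect the argument once phrased correctly.
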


\subsection{On split central extensions of affine Lie algebras} In this subsection, we present some results concerning integrable modules of affine Lie algebras that will be needed in the sequel.
 Throughout this subsection, assume 
 $\ft$ is an affine Lie algebra with Cartan subalgebra $\fh$ and   $Z$ is an abelian Lie algebra. For  \[\fk:=\ft\op Z,\] we assume that 
 \[\parbox{4.9in}{
 \it{ $V$ is a nonzero finite $(\fh\op Z=)\mathfrak H$-weight $\fk$-module on which the canonical central element of $\ft$ acts trivially and for all  real roots $\a$ of the root system $\phi$ of $\ft$ (and so of $\fk$), $x\in \fk^\a$ acts locally nilpotently on $V$.  }}\]
 One knows that there is an irreducible finite root system ${\dot\phi}$ and a free abelian group $\bbbz\d$ of rank $1$ such that the root system $\phi$ of $\fk$ is of one of the following forms:
\[\phi=\left\{\begin{array}{ll}
{\dot\phi}+\bbbz\d& X=Y^{(1)},\\
({\dot\phi}+2\bbbz\d)\cup(({\dot\phi}_{sh}\cup\{0\})+2\bbbz\d+\d)& X=A_{2\ell-1}^{(2)},D_\ell^{(2)},E_6^{(2)},\\
({\dot\phi}+3\bbbz\d)\cup(({\dot\phi}_{sh}\cup\{0\})+3\bbbz\d\pm\d)& X=D_4^{(3)},\\
({\dot\phi_{ind}}+2\bbbz\d)\cup({\dot\phi}+2\bbbz\d+\d)& X=A_{2\ell}^{(2)},
\end{array}
\right.
 \]
 {in which  $Y$ is the type of a finite dimensional simple Lie algebra}, \[{\dot\phi_{ind}}:={\dot\phi}\setminus {\dot\phi}_{ex}\] and  ``{\it sh}", ``{\it lg}" and ``{\it ex}"   stand for  ``short", ``long" and ``extra long" roots respectively. Set 
\[\fg:=\Bigop{\dot\a\in {\dot\phi_{{ind}}}}{}\fk^{\dot\a}.\]
Then, $\fg$ is a finite dimensional reductive  subalgebra of $\fk$ and $V$ is a finite weight $\fg$-module.
Using $\mathfrak{sl}_2$-module theory  and the fact that all root vectors corresponding to the real roots act locally nilpotently on $V$,  we can deduce that,
\begin{equation}\label{**}
\parbox{5.2in}{ for $\alpha \in \phi_{re}$ and $\lam\in \supp(V)$, $\frac{2(\lam,\a)}{(\a,\a)}\in\bbbz;$ moreover, if $\lam\in\supp(V)$ and  $\frac{2(\lam,\a)}{(\a,\a)}\in \bbbz^{>0}$ (resp. $\in \bbbz^{<0}$), then $\lam-\a\in \supp(V)$ (resp. $\lam+\a\in \supp(V)$).}
\end{equation}

 Fix a base $\D$ of ${\dot\phi_{ind}}$. The same argument  as stated in the proof of \cite[Lemma 2.6]{E2}  together with (\ref{**}),  implies that 
\begin{equation}\label{star}
\parbox{5.4in}{there is  $\lambda \in \supp(V)$ such that $\lambda +\adot\not\in\supp(V)$ for all $0\neq \adot\in \sspan_{\Bbb Z^{\geq 0}}\Delta,$  in particular, 
 $2(\lam,\adot)/(\adot,\adot)\in\bbbz^{\geq 0}$ for all $\adot\in {\dot\phi_{ind}}^+(\D):=({\dot\phi_{ind}}\setminus\{0\})\cap \sspan_{\Bbb Z^{\geq 0}}\Delta.$
}
\end{equation}

\begin{Thm}\label{affine-int}
{Let $\fk,\fg,\phi$ and $\Delta$ be as above}  and define a functional $$\bs{\zeta}: \sspan_\Bbb R \Delta(=\sspan_\bbbr\dot\phi) \longrightarrow \Bbb R$$
 such that 
$\bs{\zeta }(\Delta)\sub\bbbr^{>0}$.
Then, there is a nonzero weight vector $v$ such that
$\fk^{\adot+q\delta}v=\{0\}$ for all  $q\in \Bbb Z$
and $\adot \in \dot\phi$  with  $\bs{\zeta}(\adot)>0$.
\end{Thm}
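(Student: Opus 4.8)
The statement is equivalent to the finiteness of $\Omega:=\{\oline\mu\mid\mu\in\supp(V)\}$, the set of $\dot\fh$-parts of the weights of $V$, and the plan is to reduce it to this and then establish the finiteness. For the reduction: since $\bs{\zeta}(\Delta)\subseteq\bbbr^{>0}$ and every root of $\dot\phi$ is, up to sign, a nonnegative integral combination of $\Delta$, the roots $\adot\in\dot\phi$ with $\bs{\zeta}(\adot)>0$ are precisely the $\Delta$-positive ones. If $\Omega$ is finite it has a maximal element $\oline\lam$ for the dominance order of $\Delta$, so $\oline\lam+\adot\notin\Omega$ for every $\Delta$-positive root $\adot$; taking any $0\neq v\in V^{\lam}$, we get for each $q\in\bbbz$ and each $\adot$ with $\bs{\zeta}(\adot)>0$ that $\fk^{\adot+q\delta}v\subseteq V^{\lam+\adot+q\delta}=\{0\}$, since the $\dot\fh$-part of $\lam+\adot+q\delta$ equals $\oline\lam+\adot\notin\Omega$. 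Thus $v$ is the required vector. (If convenient one may first replace $V$ by a nonzero cyclic, or simple, submodule, which changes neither integrability nor the level.)

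To prove that $\Omega$ is finite I would first observe that $V|_{\fg}$ is a direct sum of finite dimensional simple $\fg$-modules. Indeed, by $\mathfrak{sl}_2$-theory and the local nilpotence of all real root vectors (see (\ref{**})), for a Borel $\fb=\fh\oplus\fn^+$ of the finite dimensional reductive algebra $\fg$ the space $U(\fn^{\pm})w$ is finite dimensional for every weight vector $w$, and hence so is $U(\fg)w=U(\fn^-)U(\fh)U(\fn^+)w$; thus $V|_{\fg}$ is locally finite, hence completely reducible. Let $\Lam$ be the set of highest weights of its simple $\fg$-constituents. Then $\Lam\subseteq\Omega\subseteq\bigcup_{\xi\in\Lam}\supp L_\fg(\xi)$, and since each $\supp L_\fg(\xi)$ is finite, $\Omega$ is finite if and only if $\Lam$ is.

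The finiteness of $\Lam$ is the crux, and it is the one point where the vanishing of the canonical central element is genuinely used: at nonzero level the $\fg$-types of an integrable finite weight module are unbounded (as for the integrable highest weight modules $L(\Lambda)$ with $\Lambda(c)>0$), whereas at level zero they are not. In the present language this is the classification of zero-level integrable finite weight modules over affine Lie algebras, and it can be reached by running the argument behind (\ref{star}) (cf. the proof of \cite[Lemma 2.6]{E2}) relative to the parabolic set $\{\adot+q\delta\mid\bs{\zeta}(\adot)\geq0\}$ of $\phi$, rather than relative to a finite base of $\dot\phi$. Granting this, $\Lam$ and hence $\Omega$ is finite, and the first paragraph completes the proof. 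I expect the main obstacle to be precisely this finiteness: bounding the $\dot\fh$-support of $V$, equivalently excluding infinitely many $\fg$-isotypic components of $V|_{\fg}$, is what forces one to combine integrability with the hypothesis of zero level; the remaining steps are formal.
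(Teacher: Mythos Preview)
Your reduction to the finiteness of $\Omega$ (equivalently of $\Lambda$) is precisely where the argument breaks down, and you yourself flag it as ``the main obstacle''. First, the equivalence you assert fails in one direction: for the module $V=\bigoplus_{n\geq 0}L_{\fg}(n\omega)\otimes\bbbc[t,t^{-1}]$ with the $d$-grading shifted by pairwise $\bbbz$-incongruent scalars $a_n$, all weight spaces are finite dimensional, $c$ acts by $0$, the module is integrable, a vector as in the theorem exists (any loop of a $\fg$-highest weight vector), yet $\Omega=\bbbz\omega$ is infinite. So finiteness of $\Omega$ is strictly stronger than the theorem. Second, your proposed proof of this finiteness --- ``running the argument behind (\ref{star}) relative to the parabolic set $\{\adot+q\delta\mid\bs{\zeta}(\adot)\geq0\}$'' --- does not yield finiteness of $\Lambda$; that argument produces a single extremal weight, not a bound on all $\fg$-highest weights. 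Taken literally, finding such an extremal weight for the affine parabolic \emph{is} the statement of the theorem, so your outline becomes circular. Passing to a simple submodule would rescue the finiteness (via Chari's classification), but you would then need to argue that a simple submodule exists, which is not automatic for integrable level-zero finite weight modules and is not addressed.

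The paper's proof avoids the finiteness of $\Omega$ altogether. It first upgrades (\ref{star}) via Chari's argument to the stronger (\ref{zero-1}): a weight $\mu$ with $\mu+\dot\gamma+rq\delta\notin\supp(V)$ for all $\dot\gamma\in\dot\phi_{ind}^+(\Delta)$ and all $q\in\bbbz$. This handles only the roots in $\dot\phi_{ind}^++r\bbbz\delta$, not all of $\{\adot+q\delta\mid\bs\zeta(\adot)>0\}$; the remaining roots (those with $q\notin r\bbbz$, or $\adot\in\dot\phi_{ex}$ in type $A_{2\ell}^{(2)}$) are handled by a second step: among the ``bad'' roots $\cA$ for a chosen $v\in V^\mu$, pick $\adot_*$ with maximal $\bs\zeta$-value and pass to $0\neq w\in\fk^{\adot_*+p_*\delta}v$; a case-by-case argument using (\ref{**}) and (\ref{zero-1}) then shows $w$ is annihilated by every $\fk^{\adot+q\delta}$ with $\bs\zeta(\adot)>0$. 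This two-stage construction is exactly the content your sketch is missing.
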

\begin{proof}
Set  \[r:=\left\{\begin{array}{ll}
1&X=Y^{(1)},\\
3&X=D_4^{(3)},\\
2&\hbox{otherwise.}
\end{array}
\right.
\] 
Using (\ref{**}) and (\ref{star}) as well as  the same argument as in \cite[Theorem 2.4(ii)]{cheri},   we get that there is $\mu\in\supp(V)$ with 
	\begin{equation}\label{zero-1} {\mu+\dot\gamma+rq\d\not\in \supp(V) \andd 2(\mu,\dot\gamma)/(\dot\gamma,\dot\gamma)\in\bbbz^{\geq 0} \quad(\dot\gamma\in{\dot\phi_{ind}}^+(\D),q\in\bbbz).}\end{equation} In particular,
	\begin{equation}\label{zero}
	\fk^{\adot+q\delta}v=\{0\}\quad(v\in V^\mu,~\dot\a\in{\dot\phi_{ind}}^+(\D),~q\in r\bbbz).
	\end{equation}
\begin{comment}
Suppose $\lam$ is the weight introduced in (\ref{star}).	
If $\lambda+\adot+rm\delta\not\in \supp(V)$ for every $m\in \Bbb Z$ and every $\adot\in {\dot\phi_{ind}}^+$, then we are done.
Otherwise, we can suppose that there are $ \adot\in {\dot\phi_{ind}}^+$	and $m\in \Bbb Z$ with 
$$\mu:=\lambda +\adot+rm\delta  \in \supp (V).$$  We claim that  $\mu$ has  the desired property. To the contrary, assume it does not. Then, there are $\bdot \in {\dot\phi_{ind}}^+$ and $q\in \Bbb Z$ such that
$$\nu:=\mu+\bdot+rq\delta \in \supp (V).$$
One knows from  finite dimensional theory  that at least one of 
$$\frac{2(\adot+\bdot,\bdot)}{(\bdot,\bdot)}~{\rm or~ }\frac{2(\adot+\bdot,\adot)}{(\adot,\adot)}$$
is positive. Without loss of generality, assume $\frac{2(\adot+\bdot,\adot)}{(\adot,\adot)}>0$. 
Then using (\ref{star}), we have 
\[\frac{2(\nu,\adot+r(n+p)\d)}{(\adot+r(n+p)\d,\adot+r(n+p)\d)}=\frac{2(\mu+\bdot,\adot)}{(\adot,\adot)}>0\]

Now, we have
\begin{align*}
&\frac{2(\mu+\bdot+q\delta,\adot+(n+p)\delta)}{(\adot,\adot)}=\\
&\frac{2(\lambda+\adot+p\delta+\bdot+q\delta,\adot+(n+p)\delta)}{(\adot,\adot)}=\\
&\frac{2(\lambda+\adot,\adot)}{(\adot,\adot)}+\frac{2(\adot+\bdot,\adot)}{(\adot,\adot)}=2+\frac{2(\lambda,\adot)}{(\adot,\adot)}+\frac{2(\adot+\bdot,\adot)}{(\adot,\adot)}>0.
\end{align*}

which in turn implies that 
$$\lambda+\bdot=\nu-(\adot+r(n+p)\delta)\in \supp(V).$$This contradicts  (\ref{star}).
\end{comment}

	If there is a nonzero weight vector $v\in V^\mu$ with $\fk^{\dot\a+q\d} v=\{0\}$  for all  $q\in \Bbb Z$
and $\adot \in \dot\phi$  with  $\bs{\zeta}(\adot)>0$, then we are done. Otherwise, we pick $0\neq v\in V^\mu$
and note that  by (\ref{zero}), 
\[\cA:=\{\adot\in\dot\phi\mid\bs{\zeta}(\adot)>0, \fk^{\adot+p\delta} v\neq \{0\}\hbox{ for some $p\in\bbbz$}\}
\]	is not empty. 
Next pick  $\adot_*\in \cA$ with  $\bs\zeta(\adot_*)={\rm max}\{\bs\zeta(\adot)\mid \adot\in \cA\}.$  Since $\adot_*\in\cA,$ there is  $p_*\in \Bbb Z$ with
$$\fk^{\adot_*+ p_*\delta}v\neq \{0\}.$$  We let 
$0\neq w\in \fk^{\adot_*+p_*\delta}v$ and show that $\fk^{\adot+q\delta}w=\{0\}$ for all  $q\in \Bbb Z$
and $\adot \in \dot\phi$  with $\dot\a+q\d\in \phi$ and  $\bs{\zeta}(\adot)>0$.
We complete the proof through  the  following cases:

\smallskip

\noindent{$\bullet$ \underline{$\dot\a\in{\dot\phi_{ind}},~q\in r\bbbz$}.} In this case, using (\ref{zero}), we have 
\begin{align*}
\fk^{\adot+q\delta}w\sub\fk^{\adot+q\delta}\fk^{\adot_*+p_*\delta}v \subseteq 
[\fk^{\adot+q\delta},\fk^{\adot_*+p_*\delta}]v+
\fk^{\adot_*+p_*\delta}\underbrace{\fk^{\adot+q\delta}v}_{\{0\}}\sub
\fk^{\adot+\adot_*+(q+p_*\delta)\d}v.
\end{align*}
If $\adot+\adot_*+(q+p_*)\delta$ is not  a root, then  $\fk^{\adot+\adot_*+q\delta+p_*\delta}=\{0\}$ and there is nothing to prove. If $\adot+\adot_*+(q+p_*)\delta$ is a root, then  since 
$\zeta (\adot+\adot_*)>\zeta (\adot_*)$,  due to the choice of $\adot_*$, we have
 $\fk^{\adot+\adot_*+(q+p_*)\delta}v=\{0\}$ which in turn implies that $\fk^{\adot+q\delta}w=\{0\}$ and so we are done.

\smallskip
\noindent{$\bullet$ \underline{$X=A_{2\ell-1}^{(2)},D_\ell^{(2)},E_6^{(2)},D_4^{(3)}$, $\dot\a\in\dot\phi_{sh},~q\in r\bbbz\pm1$}.}
Note that as $\fk^{\adot_*+ p_*\delta}v\neq \{0\}$, by (\ref{zero}), we have $\adot_*\in\dot\phi_{sh}$ and   $p_*\in r\bbbz\pm1.$  In particular, we have 
\begin{equation}\label{pos-1}
\dot\a_*+(p_*+q)\d\in \phi\andd 2(\dot\a+\dot\a_*,\dot\a_*)/(\dot\a_*,\dot\a_*)>0.
\end{equation} On the other hand, we have
\begin{align*}
\fk^{\adot+q\delta}w\sub \fk^{\adot+q\delta}\fk^{\adot_*+p_*\delta}v\subseteq V^{\mu+(\adot+\adot_*)+(p_*+q)\delta}.
\end{align*}
If  $\fk^{\adot+q\delta}\fk^{\adot_*+p_*\delta}v\neq \{0\},$ we have $\mu+(\adot+\adot_*)+(p_*+q)\delta\in \supp(V).$ This together with (\ref{**}),  (\ref{zero-1}) 
and (\ref{pos-1})
implies that 
\[\mu+\adot=\mu+(\adot+\adot_*)+(p_*+q)\delta-(\adot_*+(p_*+q)\delta)\in\supp(V),\] which contradicts (\ref{zero-1}). Therefore, $\fk^{\adot+q\delta}\fk^{\adot_*+p_*\delta}v=\{0\}$ and so, we are done.

\smallskip
\noindent{$\bullet$ \underline{$X=A_{2\ell}^{(2)}$, $\dot\a\in\dot\phi,~q\in 2\bbbz+1$}.} In this case, we have $r=2.$
Since $\fk^{\adot_*+ p_*\delta}v\neq \{0\}$, we have    $p_*\in 2\bbbz+1.$  So \begin{equation}\label{new-added}\dot\phi_{ind}+(p_*+q)\d\sub \phi.\end{equation} 
We shall show $\fk^{\adot+q\delta}w=\{0\}.$ To this end, we note that 
\begin{align*}
\fk^{\adot+q\delta}w\sub \fk^{\adot+q\delta}\fk^{\adot_*+p_*\delta}v \subseteq V^{\mu+(\adot+\adot_*)+(p_*+q)\delta}.
\end{align*}
If  $\fk^{\adot+q\delta}\fk^{\adot_*+p_*\delta}v= \{0\},$ then, we are done. So, we assume $\fk^{\adot+q\delta}\fk^{\adot_*+p_*\delta}v\neq \{0\}$ and show that it leads to a contradiction.  We have $\mu+(\adot+\adot_*)+(p_*+q)\delta\in \supp(V).$ 
Hence,  we can contemplate the following cases:
\begin{itemize}
\item[(i)] If $\adot,\adot_*\in \dot\phi_{ind},$ then since either $2\frac{(\dot\a_*+\dot\a,\dot\a)}{(\dot\a,\dot\a)}\in\bbbz^{>0}$ or $2\frac{(\dot\a_*+\dot\a,\dot\a_*)}{(\dot\a_*,\dot\a_*)}\in\bbbz^{>0}$, we get using (\ref{**}), (\ref{zero-1}) and (\ref{new-added}) that $\mu+\adot\in\supp(V)$ or $\mu+\adot_*\in\supp(V)$ contradicting (\ref{zero-1}).
\item [(ii)]If exactly one of $\adot,\adot_*$ belongs to $\dot\phi_{ind},$ then, without loss of generality, we assume $\adot=2\dot\b$ for some short root $\dot\b$ and $\adot_*\in \dot\phi_{ind}.$ We  have $2\frac{(\mu+2\dot\b+\dot\a_*,\dot\b)}{(\dot\b,\dot\b)}>0$. Using (\ref{**}), this implies that $\mu+\dot\b+\dot\a_*+(p_*+q)\d\in\supp(V)$. We get a contradiction as in the previous case.
\item [(iii)] If $\adot,\adot_*\in \dot\phi_{ex},$ then, we have   $\dot\a=2\dot\b$ and $\dot\a_*=2\dot\b'$ for some
$\dot\b,\dot\b'\in {\dot\phi}_{sh}.$
In particular, we have 
\begin{equation*}\label{pos}
\dot\b'+(p_*+q)\d\in \phi\andd 2(\dot\b+\dot\b',\dot\b')/(\dot\b',\dot\b')>0.
\end{equation*} 
 This together with (\ref{**})
implies that 
\[\mu+2\dot\b+\dot\b'=\mu+2(\dot\b+\dot\b')+(p_*+q)\delta-(\dot\b'+(p_*+q)\delta)\in\supp(V).\] Similarly, we get 
$\mu+\dot\b+\dot\b'=\mu+2\dot\b+\dot\b'-\dot\b\in\supp(V),$ and finally, as $2(\mu+\dot\b+\dot\b',\dot\b')/(\dot\b',\dot\b')>0,$ 
 we have  $\mu+\dot\b=\mu+\dot\b+\dot\b'-\dot\b'\in\supp(V)$
which is a contradiction due to (\ref{zero-1}).  
\end{itemize}
The proof is now complete.
\end{proof}
 \subsection{Twisted affine Lie superalgebras}\label{affine}
Suppose that ${\mathscr{T}}$ is a basic classical simple Lie superalgebra of type $Y$ with standard Cartan subalgebra ${H}$ and corresponding root system $\dot {\mathfrak{s}}.$ Assume $\fm$ is an even  nondegenerate supersymmetric invariant bilinear form on ${\mathscr{T}}.$ One knows that if  $Y=D(k+1,\ell)$ with  $\ell\neq 0$ or 
$Y= A(k,\ell)$ with  $(k,\ell)\neq (1,1),(0,0)$, then,  there is an automorphism $\sg:{\mathscr{T}}\longrightarrow {\mathscr{T}}$ of order 
 \[l=\left\{\begin{array}{ll}
  4& Y=A(2m,2n),\\
 2&\hbox{otherwise},
 \end{array}
 \right.
\] such that $\sg({H})\sub {H}.$ The automorphism $\sg$ induces a linear automorphism on the dual space ${H}^*$ of ${H},$  mapping $\a\in{H}^*$ to $\a\circ \sg^{-1}.$  By the abuse of notations, we denote this new automorphism by $\sg$ as well. 
Considering the root space decomposition $\displaystyle{{\mathscr{T}}=\Bigop{\dot\a\in\dot{\mathfrak{s}}}{}{\mathscr{T}}^{\dot\a}}$ of ${\mathscr{T}}$ with respect to ${H},$ we get the weight space decomposition 
 ${\mathscr{T}}=\Bigop{\a\in \dot R}{}{\mathscr{T}}^{(\a)}$  of ${\mathscr{T}}$ with respect to the fixed point subalgebra $\fh$ of $H$  under $\sg$, in which 
 \[\dot R=\{\pi(\dot\a):=\dot\a|_{_{\fh}}\mid \dot\a\in \dot{\mathfrak{s}}\}
\andd {\mathscr{T}}^{(\pi(\dot\a))}=\sum_{\substack{\dot\b\in\dot{\mathfrak{s}}\\
\pi(\dot\a)=\pi(\dot\b)}}{\mathscr{T}}^{\dot\b}
%=\sum_{i=0}^{l-1}{\mathscr{T}}^{\sg^i(\dot\a)}.
\]
We mention  that 
\begin{equation}\label{special-case}
\parbox{3.25in}{
for all types other that type $X=A(2m,2n)^{(4)}$, we have $\pi(\dot\a)=0$ ($\dot\a\in\dot{\mathfrak{s}}$) if and only if $\dot\a=0.$ }
\end{equation}
Moreover, for $l$-th primitive root $\zeta$ of unity and 
${}^{[j]}{\mathscr{T}}:=\{x\in {\mathscr{T}}\mid \sg(x)=\zeta^jx\}$ for $j\in\bbbz,$ we have 
${\mathscr{T}}=\Bigop{j=0}{l-1}{}^{[j]}{\mathscr{T}}$ and that 
\[{}^{[j]}{\mathscr{T}}=\Bigop{\a\in \dot R}{}{}^{[j]}{\mathscr{T}}^{(\a)}\quad{\rm with } \quad{}^{[j]}{\mathscr{T}}^{(\a)}={}^{[j]}{\mathscr{T}}\cap {\mathscr{T}}^{(\a)} \quad (\a\in \dot R,~j\in \bbbz).\]
Assume $\bbbc c\op\bbbc d$ is a two dimensional vector space and set  \[\hat{\mathscr{T}}:=\Bigop{j=0}{l-1}({}^{[j]}{\mathscr{T}}\ot t^j\bbbc[t^{\pm l}]),\quad \LL:=\hat{\mathscr{T}}\op\bbbc c\op\bbbc d\andd 
 \LL_c:=\hat{\mathscr{T}}\op\bbbc c.\]
The 
%superspace $\hat{\mathscr{T}}$ is a subalgebra of the loop superalgebra ${\mathscr{T}}\ot\bbbc[t^{\pm1}]$ called a twisted loop algebra and the 
superspace  $\LL$ together with the bracket 
$$[x\ot t^p+rc+sd,y\ot t^q+r'c+s'd]:=[x,y]\ot t^{p+q}+p(x,y)\d_{p+q,0}c+sqy\ot t^q-s'px\ot t^p$$
is a Lie superalgebra called {\it twisted affine Lie superalgebra} of type $Y^{(l)}$ (remember that $Y$ is the type of $\mathscr{T}$ and $l$ is the order of $\sg$). The subspace $\LL_c$ is an ideal of $\LL$  called the {\it core} of $\LL.$ The {\it centerless core} of $\LL$ is defined as
\begin{equation*}\label{centerless}
\LL_{cc}:=\LL_c/\bbbc c.
\end{equation*}
Set
\begin{equation}\label{cartan}
 \hh:=\fh\op\bbbc c \op\bbbc d\andd \sch:=\fh \op\bbbc d.
\end{equation}
The subspace $\hh$   is the standard Cartan subalgebra of $\LL$ and the root system of $\LL$ with respect to $\hh$ is 
\[R=\{\pi(\dot\a)+k\d\mid {}^{[k]}{\mathscr{T}}^{(\pi(\dot\a))}\neq \{0\}, k\in\bbbz,\dot\a\in\dot{\frak{s}}\}.\] 
For $\pi(\dot\a)+k\d\in R\setminus\{0\},$  we have $\LL^{\pi(\dot\a)+k\d}={}^{[k]}{\mathscr{T}}^{(\pi(\dot\a))}\ot t^k$ and $ \LL^{0}=\fh\op\bbbc c\op\bbbc d=\hh.$
Moreover, 
\[(x\ot t^p+rc+sd,y\ot t^q+r'c+s'd)=\d_{p+q,0}(x,y)+rs'+sr'\] defines an even nondegenerate supersymmetric invariant bilinear form on $\LL$ which is nondegenerate on $\hh$; in particular, it naturally induces a symmetric nondegenerate bilinear form on $\hh^*$ denoted again by $\fm.$  
Since the  form on  $\hh$ is a nondegenerate symmetric bilinear form, we conclude that
 \begin{equation}\label{tdota}
 \parbox{5.5in}{for each $\a\in \hh^*,$ there is a unique $t_\a\in\hh$ with $\a(h)=(t_\a,h)$ for all $h\in\hh.$}
 \end{equation}
We have 
$\hh= \bbbc c\op\bbbc d \op \sum_{\dot \a\in \dot R=\pi(\dot{\frak{s}})}\bbbc t_{\dot \a}$ with
\begin{equation}\label{talpha}(c,d)=1,(c,c)=(d,d)=0,(c,\sum_{\dot \a\in \dot R}\bbbc t_{\dot \a})=(d,\sum_{\dot \a\in \dot R}\bbbc t_{\dot \a})=\{0\}.\end{equation}
 We note that the superalgebra $\LL_{cc}\rtimes \bbbc d$ has a root space decomposition with respect to $\sch$ (see (\ref{cartan})) with the same root system $R$.
The root system $R$ of $\LL$ can be expressed as shown  in the following table:
\begin{table}[h]\caption{Root systems of twisted affine Lie superalgebras} \label{table1}
% "h" means that the table will be placed "here", if we use [t], it will be placed on "top" and [b] indicates the "below". If we do not use this command, the table is floating.
 {\footnotesize \begin{tabular}{|c|l|}
\hline
$\hbox{Type}$ &\hspace{3.25cm}$R$ \\
\hline
$\stackrel{A(2m,2n-1)^{(2)}}{{\hbox{\tiny$(m,n\in\bbbz^{\geq0},n\neq 0)$}}}$&$\begin{array}{rcl}
\bbbz\d
&\cup& \bbbz\d\pm\{\ep_i,\d_p,\ep_i\pm\ep_j,\d_p\pm\d_q,\ep_i\pm\d_p\mid i\neq j,p\neq q\}\\
&\cup& (2\bbbz+1)\d\pm\{2\ep_i\mid 1\leq i\leq m\}\\
&\cup& 2\bbbz\d\pm\{2\d_p\mid 1\leq p\leq n\}
\end{array}$\\
\hline
$\stackrel{A(2m-1,2n-1)^{(2)}}{{\hbox{\tiny$(m,n\in\bbbz^{>0},(m,n)\neq (1,1))$}}}$& $\begin{array}{rcl}
\bbbz\d&\cup& \bbbz\d\pm\{\ep_i\pm\ep_j,\d_p\pm\d_q,\d_p\pm\ep_i\mid i\neq j,p\neq q\}\\
&\cup& (2\bbbz+1)\d\pm\{2\ep_i\mid 1\leq i\leq m\}\\
&\cup& 2\bbbz\d\pm\{2\d_p\mid 1\leq p\leq n\}
\end{array}$\\
\hline
$\stackrel{A(2m,2n)^{(4)}}{ {\hbox{\tiny$(m,n\in\bbbz^{\geq0},(m,n)\neq (0,0))$}}}$& $\begin{array}{rcl}
\bbbz\d&\cup&  \bbbz\d\pm\{\ep_i,\d_p\mid 1\leq i\leq m,\;1\leq p\leq n\}\\
&\cup& 2\bbbz\d\pm\{\ep_i\pm\ep_j,\d_p\pm\d_q,\d_p\pm\ep_i\mid i\neq j,p\neq q\}\\
&\cup&(4\bbbz+2)\d\pm\{2\ep_i\mid 1\leq i\leq m\}\\
&\cup& 4\bbbz\d\pm\{2\d_p\mid 1\leq p\leq n\}
\end{array}$\\
\hline
$\stackrel{D(m+1,n)^{(2)}}{{\hbox{\tiny$(m,n\in\bbbz^{\geq 0},n\neq 0)$}}}$& $\begin{array}{rcl}
\bbbz\d&\cup&  \bbbz\d\pm\{\ep_i,\d_p\mid 1\leq i\leq m,\;1\leq p\leq n\}\\
&\cup& 2\bbbz\d\pm\{2\d_p,\ep_i\pm\ep_j,\d_p\pm\d_q,\d_p\pm\ep_i\mid i\neq j,p\neq q\}
\end{array}$\\
\hline
 \end{tabular}}
 \end{table}
 
Here  \[(\d,R)=\{0\},~ (\ep_i,\ep_j)=\d_{i,j},~(\d_p,\d_q)=-\d_{p,q}.\] Moreover, 
\[\dot R=\{\dot\a\in \sspan_\bbbz\{\ep_i,\d_p\mid i,p\}\mid (\dot\a+\bbbz\d)\cap R\neq \emptyset\}.\] For each $S\sub R,$ we define 
\begin{equation}\label{sdot}
\dot S:=\{\dot\a\in\dot R\mid S\cap (\dot\a+\bbbz\d)\neq \emptyset\}.
\end{equation}
We have 
$\dot R^\times=\dot R\setminus\{0\}=\dot R_{ns}\cup \dot R_{re}$ in which $\dot R_{ns}$ and $\dot R_{re} $ are as in the following table:

\begin{table}[h]\caption{Real and nonsingular roots of $\dot R$} \label{table-ns}
% "h" means that the table will be placed "here", if we use [t], it will be placed on "top" and [b] indicates the "below". If we do not use this command, the table is floating.
 {\footnotesize \begin{tabular}{|c|l|c|}
\hline
$\hbox{Type}$ &\hspace{2.5cm}$\dot R_{re}$ &$\dot R_{ns}$\\
\hline
$\stackrel{A(2m,2n-1)^{(2)}}{{\hbox{\tiny$(m,n\in\bbbz^{\geq0},n\neq 0)$}}}$&$\begin{array}{l}
\{\pm\ep_i,\pm\d_p,\pm2\ep_i,\pm2\d_p,
\ep_i\pm\ep_j,\d_p\pm\d_q\\
\mid 1\leq i\neq j\leq m,1\leq p\neq q\leq n\}\\
\end{array}$& $\begin{array}{l}
\{\pm \ep_i\pm\d_p\mid 1\leq i\leq m,1\leq p\leq  n\}\\
\vspace{-2mm}\\
\sub\dot R_{re}+\dot R_{re}
\end{array}
$\\
\hline
$\stackrel{A(2m-1,2n-1)^{(2)}}{{\hbox{\tiny$(m,n\in\bbbz^{>0},(m,n)\neq (1,1))$}}}$& $\begin{array}{l}
\{\pm2\ep_i,\pm2\d_p,
\ep_i\pm\ep_j,\d_p\pm\d_q\\
\mid 1\leq i\neq j\leq m,1\leq p\neq q\leq n\}\\
\end{array}$& $\begin{array}{l}
\{\pm\ep_i\pm\d_p\mid 1\leq i\leq m,1\leq p\leq  n\}\\
\vspace{-2mm}\\
\sub\frac{1}{2}(\dot R_{re}+\dot R_{re})
\end{array}
$\\
\hline
$\stackrel{A(2m,2n)^{(4)}}{ {\hbox{\tiny$(m,n\in\bbbz^{\geq0},(m,n)\neq (0,0))$}}}$& $\begin{array}{l}
\{\pm\ep_i,\pm\d_p,\pm2\ep_i,\pm2\d_p,
\ep_i\pm\ep_j,\d_p\pm\d_q\\
\mid 1\leq i\neq j\leq m,1\leq p\neq q\leq n\}\\
\end{array}$ & $\begin{array}{l} \{\pm\ep_i\pm\d_p\mid 1\leq i\leq m,1\leq p\leq  n\}
\\
\vspace{-2mm}\\
\sub\dot R_{re}+\dot R_{re}
\end{array}
$
\\
\hline
$\stackrel{D(m+1,n)^{(2)}}{{\hbox{\tiny$(m,n\in\bbbz^{\geq 0},n\neq 0)$}}}$& $\begin{array}{l}
\{\pm\ep_i,\pm\d_p,\pm2\d_p,
\ep_i\pm\ep_j,\d_p\pm\d_q\\
\mid 1\leq i\neq j\leq m,1\leq p\neq q\leq n\}\\
\end{array}$& $ \begin{array}{l}\{\pm\ep_i\pm\d_p\mid 1\leq i\leq m,1\leq p\leq  n \}
\\
\vspace{-2mm}\\
\sub\dot R_{re}+\dot R_{re}
\end{array}
$\\
\hline
 \end{tabular}}
 \end{table}

In  our main theorem,  we   will characterize elements of a class  of $\LL$-modules. For this, we need some information that we are gather in the following remark.  Due to the fact pointed out in (\ref{special-case}), we need to address separately cases $X=A(2m,2n)^{(4)}$ and $X\not=A(2m,2n)^{(4)}$ (the details  regarding this remark are available in \cite{KY}).
\begin{rem}\label{rem-MMM}
\rm  Assume $\scf\sub \LL_c$ is an $\hh$-invariant subalgebra of $\LL_c$, that is $[\scf,\scf]\sub \scf$ and  $\scf=\sum_{\a\in R}\scf^\a$ with $\scf^\a=\LL^\a\cap\scf.$ The quotient space  $\scf_{cc}:=(\scf+\bbbc c)/\bbbc c$ is a subalgebra of $\LL_{cc}$ which is identified with $\sum_{0\neq \a\in R}\scf^\a\op(\fh\cap \scf^0)$.  For now, we denote the bracket on $\LL_{cc}$ by $[\cdot,\cdot]_c.$
Now, suppose 
$S$ is a symmetric closed subset\footnote{It means that $S=-S$ and $(S+S)\cap R\sub S$.} of $R$ with $R\cap (S+\bbbz\d)\sub S$ and $2\bbbz\d\sub S_{im}$. For 
\[\scf(S):=\fh+\bbbc c+\sum_{0\neq \a\in S}\LL^\a+\sum_{k\in\bbbz}\LL^{(2k+1)\d},\] we have \begin{equation}\label{fscc}\scf(S)_{cc}:=\fh+\sum_{0\neq \a\in S}\LL^\a+\sum_{k\in\bbbz}\LL^{(2k+1)\d}.\end{equation}
Next assume  $$S_{im}=2\bbbz\d\andd (S+(2\bbbz+1)\d)\cap R=\emptyset.$$
\\
\noindent $\bullet$ $X\not =A(2m,2n)^{(4)}:$ In these cases, due to (\ref{special-case}), we have $[\sum_{k\in\bbbz}\LL^{(2k+1)\d},\sum_{k\in\bbbz}\LL^{(2k+1)\d}]_c=\{0\}$ and so we get $[\scf(S)_{cc},\sum_{k\in\bbbz}\LL^{(2k+1)\d}]_c=\{0\}.$
\smallskip
\\
\noindent $\bullet$ $X=A(2m,2n)^{(4)}:$ In this case 
\[\scf(S)_{cc}=\fh\op\sum_{ \a\in S^\times}\LL^\a\op\sum_{k\in\bbbz}\LL^{(2k+1)\d}\op\mathscr{K}\op\Bigop{k\in\bbbz}{}(\bbbc x_{4k+2}\op\bbbc y_{4k+2})\op((1-\d_{m,n})\ii)\ot t^2\bbbc[t^{\pm2}])\]
in which 
\begin{itemize}
\item $\ii$ is the diagonal block matrix in the basic classical simple Lie superalgebra $\mathscr{T}$ of type $A(2m,2n)$ whose first block diagonal is   $\frac{1}{2m+1}$ multiple of the identity matrix and the second block diagonal is   $\frac{1}{2n+1}$ multiple of the identity matrix.
\item $x_{4k+2},y_{4k+2}\in \LL^{(4k+2)\d}$ ($k\in \bbbz$)  are such that 
 \begin{equation}\label{y's} [\scf(S)_{cc},\Bigop{k\in\bbbz}{}\bbbc y_{4k+2}]_c=\{0\}\end{equation}
and \[\Bigop{k\in\bbbz}{}\LL^{(2k+1)\d}\op\Bigop{k\in\bbbz}{}(\bbbc x_{4k+2}\op\bbbc y_{4k+2})\] is a subalgebra  of $\LL_{cc}$ is isomorphic to the {\it quadratic} Lie superalgebra
\[\mathcal{Q}:=\underbrace{s^2\bbbc[s^{\pm4}]\op t^2\bbbc[t^{\pm4}]}_{\mathcal{Q}_0}\op \underbrace{t\bbbc[t^{\pm4}]\op t^{-1}\bbbc[t^{\pm4}]}_{\mathcal{Q}_1}\]
whose Lie bracket is 
\small{\begin{equation*}\label{bracket}\begin{array}{lll}
~[\mathcal{Q}_0,\mathcal{Q}_0]=[t^2\bbbc[t^{\pm4}], \mathcal{Q}_1]:=\{0\},& ~[t^{4k+1},t^{4k'-1}]:=0,&
~[t^{4k+1},t^{4k'+1}]:=t^{4(k+k')+2},\\
~[t^{4k-1},t^{4k'-1}]:=-t^{4(k+k')-2},&
~[s^{4k-2},t^{4k'+1}]:=t^{4(k+k')-1},&
~[s^{4k+2},t^{4k'-1}]:=t^{4(k+k')+1}.
\end{array}
\end{equation*}}
\item 
$\mathscr{K}:=\displaystyle{\sum_{\substack{
\dot\a\in\dot R\setminus \{0,\pm\ep_i,\pm\d_p\mid i,p\}\\k,k'\in\bbbz,k+k'\neq 0}}[\LL^{\dot\a+k\d},\LL^{-\dot\a+k'\d}]_c}
$ is an abelian subalgebra of $\scf(S)_{cc}\cap \Bigop{k\in\bbbz}{}\LL^{2k\d}$.\end{itemize}
Therefore, \begin{equation}\label{zfc}
\parbox{5.4in}{
for all types, whether $X=A(2m,2n)^{(4)}$ or $X\not =A(2m,2n)^{(4)},$
and  a symmetric closed subset $S$ of $R$  with $R\cap (S+\bbbz\d)\sub S$, $(S+(2\bbbz+1)\d)\cap R=\emptyset$ and  $S_{im}=2\bbbz\d$,  the center $Z(\scf(S)_{cc})$ of the subalgebra $\scf(S)_{cc}$ of $\LL_{cc}$ is nonzero. More precisely, there is $r\in \bbbz\setminus\{0\}$ such that $\LL^{r\d}\cap Z(\scf(S)_{cc})\neq \{0\}.$ }
\end{equation}
\end{rem}

\begin{Pro}\label{tetra}
Recall (\ref{sdot}) and (\ref{fscc}) and assume  $S$ is a symmetric closed subset of $R_0$ with $R\cap (S+\bbbz\d)\sub S$ and $2\bbbz\d\sub S_{im}$. Consider the subalgebra  $\scf(S)_{cc}$ of $\LL_
{cc}$ and suppose that   $V$ is a simple finite $\fh$-weight $\scf(S)_{cc}$-module.  For  $0\neq \dot\a\in \dot S$, the  following are equivalent:
\begin{itemize}
\item[(i)] If $\lam\in\supp(V)\sub\fh^*,$ then, $\lam+s\dot\a \not \in \supp(V)$ for all but finitely many $s\in \bbbz^{>0}.$
\item[(ii)] There exists $\lam\in\supp(V)$ such  that  $\lam+s\dot\a\not \in \supp(V)$ for all but finitely many $s\in \bbbz^{>0}.$
\item[(iii)] For all $k\in \bbbz$ with $\dot\a+k\d\in S,$  $0\neq x\in \LL^{\dot\a+k\d}$ acts locally nilpotently on $V$. 
\item[(iv)] There is $k\in \bbbz$ with $\dot\a+k\d\in S$ such that   $0\neq x\in \LL^{\dot\a+k\d}$ acts locally nilpotently on $V$. 
\end{itemize}
\end{Pro}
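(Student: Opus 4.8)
The natural strategy is to prove the cycle of implications $(iii)\Rightarrow(i)\Rightarrow(ii)\Rightarrow(iv)\Rightarrow(iii)$, so that each arrow is as easy as possible. The implications $(i)\Rightarrow(ii)$ and $(iii)\Rightarrow(iv)$ are trivial (a universal statement implies its existential counterpart, once one checks that for $0\neq\dot\a\in\dot S$ there is at least one $k\in\bbbz$ with $\dot\a+k\d\in S$, which holds by the definition \eqref{sdot} of $\dot S$). So the content lies in $(iii)\Rightarrow(i)$ and in $(ii)\Rightarrow(iv)$, together with an $\mathfrak{sl}_2$-theory bridge between local nilpotency of a root vector and the support condition.

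For $(iii)\Rightarrow(i)$: fix $\dot\a\neq 0$ in $\dot S$ and pick any $k$ with $\dot\a+k\d\in S$. Since $\dot\a\in\dot R_{re}\cup\dot R_{ns}$ and $2\bbbz\d\subseteq S_{im}$, the element $\dot\a+k\d$ is a real or a nonsingular root of $R$, but in either of the $A(2m,2n-1)^{(2)}$-type tables the relevant roots of the form $\dot\a+k\d$ are real (and in the nonsingular case $\dot\a\in\dot R_{re}+\dot R_{re}$, so one reduces to real roots). I would argue: if $0\neq x\in\LL^{\dot\a+k\d}$ acts locally nilpotently on the simple module $V$, then choosing $0\neq y\in\LL^{-(\dot\a+k\d)}$ with $\langle x,y\rangle$ spanning an $\mathfrak{sl}_2$ (or at least a subalgebra in which $[x,y]$ acts by a nonzero multiple of a coroot), the standard $\mathfrak{sl}_2$-module argument shows that for each $\lam\in\supp(V)$ one has $\frac{2(\lam,\dot\a)}{(\dot\a,\dot\a)}\in\bbbz$ and that $\lam+s(\dot\a+k\d)$ eventually leaves $\supp(V)$ as $s\to+\infty$. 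Since $\d$ is central and $(\d,R)=\{0\}$, the $\d$-component is irrelevant for the $\mathfrak{sl}_2$-weight count, and one gets $\lam+s\dot\a\notin\supp(V)$ for $s$ large; this is precisely $(i)$. The only subtlety is when $\dot\a$ is nonsingular rather than real: then I would use the table relation $\dot R_{ns}\subseteq\dot R_{re}+\dot R_{re}$ (or $\subseteq\frac12(\dot R_{re}+\dot R_{re})$ in the $A(2m-1,2n-1)^{(2)}$ case) to express $\dot\a$ as a sum of two real roots $\dot\b_1,\dot\b_2$, apply the real-root conclusion to each $\dot\b_i$, and add — here one must be a little careful that the "eventually empty" property is additive along sums of directions, which follows from the same $\mathfrak{sl}_2$ string arguments applied along $\dot\b_1$ and then $\dot\b_2$.

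For $(ii)\Rightarrow(iv)$: suppose there is $\lam\in\supp(V)$ with $\lam+s\dot\a\notin\supp(V)$ for all large $s$, yet, contrary to $(iv)$, for every $k$ with $\dot\a+k\d\in S$ each nonzero $x\in\LL^{\dot\a+k\d}$ acts injectively on $V$. Injectivity of such an $x$ would mean $\LL^{\dot\a+k\d}V^\mu\neq\{0\}$ for all $\mu\in\supp(V)$, hence $\mu+\dot\a+k\d\in\supp(V)$ for all $\mu\in\supp(V)$ and all admissible $k$; iterating, $\lam+s\dot\a\in\supp(V)$ for all $s\geq 1$ (choosing, for each $s$, some admissible $k$-shift and noting that the accumulated $\d$-shift can be absorbed because the set of admissible $k$ is an arithmetic progression and one can also move in the $-\d$ direction using the real root $-\dot\a-k'\d$ — or, more cleanly, work modulo $\bbbz\d$ throughout since supports only matter up to the $\d$-grading in the count). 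This contradicts the hypothesis of $(ii)$. Thus some nonzero root vector in some $\LL^{\dot\a+k\d}$ fails to be injective; and since root vectors of real (or nonsingular-reducible-to-real) roots act on a simple finite weight module either injectively or locally nilpotently — this dichotomy is exactly the mechanism recorded for real roots in the discussion around \eqref{in-nil} and used throughout the paper — that vector must act locally nilpotently, giving $(iv)$.

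The main obstacle I anticipate is the bookkeeping with the affine grading: the module $V$ is only an $\fh$-weight module (weights live in $\fh^*$, not $\sch^*$), so the integer $\d$-shift attached to a given $\dot\a$ is not detected by $\supp(V)$ at all, and one must be careful not to conflate "$\dot\a+k\d$ acts injectively/nilpotently" (a statement about the operator on $V$, which does depend on $k$ a priori) with "$\lam+s\dot\a\in\supp(V)$" (which does not see $k$). The clean way around this is: first establish, using the $\mathfrak{sl}_2$-triple $\{x_{\dot\a+k\d},\,h,\,y_{-\dot\a-k\d}\}$ for a single convenient $k$, the integrality $\frac{2(\lam,\dot\a)}{(\dot\a,\dot\a)}\in\bbbz$ and the basic string behaviour; then observe that once one value of $k$ gives a locally nilpotent (resp. injective) action, the support-side consequence is $k$-independent, and finally transfer back to all other $k$ via the dichotomy. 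The nonsingular-root case and, for $X=A(2m,2n)^{(4)}$, the possibility that some $\dot\a+k\d$ lies in the "extra" summands described in Remark~\ref{rem-MMM} will each need a short separate check, but in every instance the table relations reduce matters to genuine real roots where $\mathfrak{sl}_2$-theory applies directly.
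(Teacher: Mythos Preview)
Your cycle and the easy implications are fine, and your argument for $(ii)\Rightarrow(iv)$ is correct (indeed, the same reasoning gives $(ii)\Rightarrow(iii)$ directly, since the dichotomy ``injective or locally nilpotent'' applies to each $k$ separately). Also note that the nonsingular case never arises: since $S\sub R_0$, any $0\neq\dot\a\in\dot S$ with $\dot\a+k\d\in S$ is automatically real, so that part of your discussion is unnecessary.

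The genuine gap is in your $(iii)\Rightarrow(i)$ (which is really $(iv)\Rightarrow(i)$, since you only use one $k$). You assert that ``the standard $\mathfrak{sl}_2$-module argument shows that $\frac{2(\lam,\dot\a)}{(\dot\a,\dot\a)}\in\bbbz$ and that $\lam+s\dot\a$ eventually leaves $\supp(V)$.'' Neither claim follows from local nilpotency of $x$ alone. The integrality is false in general (Verma modules have arbitrary highest weights). More seriously, local nilpotency of $x$ only bounds the $\dot\a$-string \emph{through a given vector}; it says nothing about whether $V^{\lam+s\dot\a}$ can be nonzero for arbitrarily large $s$ via \emph{other} vectors. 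There is no reason a priori why $U:=\sum_{s\in\bbbz}V^{\lam+s\dot\a}$, viewed as an $\mathfrak{sl}_2$-module, should have weights bounded above.

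What the paper actually does for $(iv)\Rightarrow(i)$ is this: assume to the contrary that $\lam+s\dot\a\in\supp(V)$ for infinitely many $s>0$. First, the opposite root vector $y$ must act injectively on $U$ (otherwise both $x,y$ act locally nilpotently, forcing $U$ to be a direct sum of finite-dimensional simples, and then unboundedness of the weights makes some weight space infinite-dimensional). With $y$ injective, one uses the Casimir element $\mathfrak c=(h+1)^2+4yx$: for each large $s$, the finitely generated $\mathfrak{sl}_2$-submodule through $V^{\lam+s\dot\a}$ has composition factors that are highest weight (since $x$ is locally nilpotent), producing a highest weight $\mu_s$ with $\mu_s\geq$ the weight at level $s$ and $(\mu_s+1)^2=\tau_s$ a Casimir eigenvalue. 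As $s\to\infty$ one obtains infinitely many distinct $\tau_s$, and applying powers of the injective $y$ to the corresponding highest weight vectors produces infinitely many linearly independent elements of $V^\lam$, contradicting finiteness of the weight spaces. This Casimir/injectivity mechanism is the missing idea in your sketch.
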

\begin{proof}  (i)$\Rightarrow$(ii) and  (iii)$\Rightarrow$(iv) are trivial. Also, (ii)$\Rightarrow$(iii) follows easily from the simplicity of $V$ together with the fact that $x\in \LL^{\dot\a+k\d}$ ($k\in \bbbz$) acts locally nilpotently on $\LL_{cc}.$  So, we just need to prove that (iv) implies (i). Assume to the contrary that (iv) holds but (i) is not satisfied. So, there is $\lam\in\supp(V)$ such that $\lam+s\dot\a\in\supp(V)$ for infinitely many $s\in\bbbz^{>0}.$
Assume $k$ and $x$ are as in statement of (iv).  Set $\a:=\dot\a+k\d.$  Pick $y\in\LL^{-\a}$ such that for $h:=[x,y]$ (modulo $\bbbc c$), $(x,y,h)$ is an $\mathfrak{sl}_2$-triple.  Set $G:=\bbbc x\op\bbbc y\op\bbbc h\simeq\frak{sl}_2$ which is a subalgebra of $\LL_{cc}$. Recalling (\ref{tdota}), we have  $U:=\sum_{s\in\bbbz}V^{\lam+s\dot\a}$ is a $G$-submodule of $V$ with the set of weights 
\[\left\{\frac{2\lam(t_{\dot\a})}{(\dot\a,\dot\a)}+2s\mid s\in \bbbz, \lam+s\dot\a\in \supp(V)\right\}\sub \bbbc.\] 
We notice  that $y$  acts injectively on $U$ as otherwise both $x$ and $y$ act locally nilpotently on $U$ and so  $U$ is a  completely reducible $G$-module with finite dimensional constituents; this in turn implies that  at least one of the weight spaces is infinite dimensional which is a contradiction.

For $s\in \bbbz^{>0}$, assume $\dot V(s)$ is the finitely generated $G$-submodule of $U$  generated by $V^{\lam+s\dot\a}.$ Each weight space of $\dot V(s)$ is invariant under the Casimir element $\mathfrak{c}=(h+1)^2+4yx$ of $G.$ Pick  an eigenvalue  $\tau_s$  of $\mathfrak{c}$ on $V^{\lam+s\dot\a}$ and set 
\begin{align*}
V(\tau_s):=\sum_{\mu\in \bbbc}\{v\in \dot V(s)^\mu\mid \exists r\in \bbbz^{>0}~\hbox{ s.t. } ~(\mathfrak{c}-\tau_s)^r(v)=0\}.
\end{align*}
We know that $V(\tau_s)$ has finite length, so,  there are a chain 
$V_1(\tau_s)\sub \cdots\sub V_{p_\tau}(\tau_s)=V(\tau_s)$
of submodules of $V(\tau_s)$ such that  $V_1(\tau_s)$ and $V_{i+1}(\tau_s)/V_i(\tau_s)$  ($1\leq i\leq p_\tau-1$) are simple $G$-modules. Since $x$ acts locally nilpotently on $U$, $x$ acts locally nilpotently on $V_1(\tau_s)$ and on $V_{i+1}(\tau_s)/V_i(\tau_s)$ for $1\leq i\leq p_\tau-1.$ In particular, 
by \cite[Proposition 3.55]{maz},  $V_1(\tau_s)$ as well as  $V_{i+1}(\tau_s)/V_i(\tau_s)$'s are highest weight modules of weights $\mu_1$ and $\mu_2$ respectively  with $(\mu_i+1)^2=\tau_s,$  for $i=1,2.$
So, $V(\tau_s)$ has a  highest weight of the form  $\mu_{\tau_s}= \frac{2\lam(t_{\dot\a})}{(\dot\a,\dot\a)}+2n_{\tau_s}$ for some $n_{\tau_s}\in\bbbz$ with $n_{\tau_s}\geq s$ and $(\mu_{\tau_s}+1)^2=\tau_s,$ This helps us to get positive integers 
\[s_1\leq n_{\tau_{s_1}}<s_2\leq n_{\tau_{s_2}}<\ldots \]
such that \[\frac{2\lam(t_{\dot\a})}{(\dot\a,\dot\a)}+n_{\tau_{s_1}}\not\in\bbbz^{<0},~~(\frac{2\lam(t_{\dot\a})}{(\dot\a,\dot\a)}+2n_{\tau_{s_i}}+1)^2=\tau_{s_i}\andd
 V(\tau_{s_i})^{\frac{2\lam(t_{\dot\a})}{(\dot\a,\dot\a)}+2n_{\tau_{s_i}}}\neq \{0\}\quad (i\in\bbbz^{\geq 1}).\]  It is not hard to see that $\tau_{s_i}$'s are distinct.  On the other hand, as $y$ acts injectively, 
we have \[\{0\}\neq y^r V(\tau_{s_i})^{\frac{2\lam(t_{\dot\a})}{(\dot\a,\dot\a)}+2n_{\tau_{s_i}}}\sub V(\tau_{s_i})^{\frac{2\lam(t_{\dot\a})}{(\dot\a,\dot\a)}+2n_{\tau_{s_i}}-2r}\quad(r\in\bbbz^{>0})\]
which  contradicts finite dimensionality of $V^\lam$. This completes the proof.
\end{proof}

\begin{Pro} \label{tight}
Assume $S$  is a symmetric closed subset of $R_0$ with $R\cap (S+\bbbz\d)\sub S$, $S_{re}\neq \emptyset$ and  $S_{im}=2\bbbz\d$ such that $(S+(2\bbbz+1)\d)\cap R=\emptyset.$ Consider 
$\scf(S)_{cc}$  as in (\ref{fscc}) and 
set \[\scg:=\scf(S)_{cc}\andd 
\scl:=\scf(S)_{cc}\rtimes\bbbc d.
\]
Recall (\ref{zfc}) and suppose that $M$ is a simple finite $\sch$-weight $\scl$-module  such that there is $z\in \LL^{r\d}\cap Z(\scg),$ for some nonzero $r\in\bbbz,$ which  acts nontrivially on $M$. Let $ \a\in S_{re}$. Then,  elements of $ \LL^{\a}$ act locally nilpotently on   $M$ if and only if  elements of  $\LL^{\a+s\d}$ act locally nilpotently on  $M,$ for all $s\in \bbbz$.
\end{Pro}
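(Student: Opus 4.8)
The plan is to use the hypothesis on the central element $z$ to create enough ``room'' in the $\bbbz\d$‑direction of $\supp(M)$, and then to invoke verbatim the $\mathfrak{sl}_2$/Casimir analysis already carried out in the proof of Proposition~\ref{tetra}. The first thing I would establish is that $z$ acts injectively on $M$. Since $z\in\LL^{r\d}$ we have $[d,z]=rz$, so for $m\in\ker z$ one computes $z(dm)=d(zm)-r\,zm=0$; as $z$ is central in $\scg$, it follows that $\ker z$ is a $\scl$‑submodule of $M$, and it is proper because $z$ acts nontrivially. Hence $\ker z=\{0\}$ and every power of $z$ acts injectively on $M$.

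Next I would record the following support bound: if $\gamma\in S_{re}$ and $\LL^\gamma$ acts locally nilpotently on $M$, then for every $\lam\in\supp(M)$ the set $\{s\in\bbbz^{>0}\mid\lam+s\gamma\in\supp(M)\}$ is finite. This is exactly the content of the proof of Proposition~\ref{tetra}, read with the $\sch$‑weight spaces of $M$ in place of the $\fh$‑weight spaces of $V$: if the set were infinite, the $\mathfrak{sl}_2$‑triple $(x_\gamma,y_\gamma,h_\gamma)$ with $0\neq x_\gamma\in\LL^\gamma$, $0\neq y_\gamma\in\LL^{-\gamma}$ and $h_\gamma=[x_\gamma,y_\gamma]\in\fh$ acts on $U:=\sum_{n\in\bbbz}M^{\lam+n\gamma}$, a module all of whose weight spaces are finite‑dimensional; $y_\gamma$ acts injectively on $U$ (using the dichotomy for real root vectors on the simple module $M$ recalled in the Introduction), and the Casimir‑eigenvalue argument of Proposition~\ref{tetra}, together with \cite[Proposition 3.55]{maz}, then forces infinitely many linearly independent generalized Casimir eigenvectors into $M^\lam$, contradicting $\dim M^\lam<\infty$.

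With these in hand the proposition follows. The implication ``$\LL^{\a+s\d}$ locally nilpotent for all $s\in\bbbz$'' $\Rightarrow$ ``$\LL^\a$ locally nilpotent'' is trivial (take $s=0$), and for the converse it suffices to treat those $s$ with $\a+s\d\in R$, since otherwise $\LL^{\a+s\d}=\{0\}$. So assume $\LL^\a$ acts locally nilpotently but, towards a contradiction, $\LL^\b$ does not, where $\b:=\a+j\d\in S_{re}$ for some $j\in\bbbz\setminus\{0\}$; by the dichotomy $\LL^\b$ then acts injectively. Fixing $\lam\in\supp(M)$ and $0\neq m\in M^\lam$, injectivity of $0\neq x_\b\in\LL^\b$ gives $x_\b^{\,t}m\neq0$, hence $\lam+t\b\in\supp(M)$ for all $t\geq0$. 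Writing $\lam+t\b=(\lam+t\a)+tj\d$, I would cancel the drift $tj\d$ by powers of a nonzero central element $z'\in\LL^{r'\d}\cap Z(\scg)$ whose degree $r'$ has the sign opposite to $j$: for $t$ running through the positive multiples of $r'/\gcd(r',j)$ the exponent $-tj/r'$ is a nonnegative integer, and the injective operator $(z')^{-tj/r'}$ maps $M^{\lam+t\b}$ into $M^{\lam+t\b-tj\d}=M^{\lam+t\a}$. Thus $\lam+t\a\in\supp(M)$ for infinitely many $t\in\bbbz^{>0}$, contradicting the support bound applied to $\gamma=\a$; hence $\LL^\b$ acts locally nilpotently, as required.

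The step I expect to be the real obstacle is the one glossed over just above: guaranteeing the existence of a nonzero central element $z'\in\LL^{r'\d}\cap Z(\scg)$ of the \emph{correct} degree‑sign $r'$ that acts \emph{nontrivially} — equivalently, by the first paragraph, injectively — on $M$. Here one must combine the explicit description of $Z(\scg)$ in Remark~\ref{rem-MMM} (which in every case supplies nonzero imaginary‑degree central elements of both signs) with the structural fact, recalled in the Introduction, that along a single $\d$‑string the injective and the locally nilpotent real roots are separated by one root; passing if necessary from $\a$ to $-\a$ (whose $\bbbz\d$‑string is the mirror of that of $\a$) and re‑selecting $\b$ accordingly, this separation is what lets one reduce to a configuration in which a central element of the needed sign acts injectively. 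Everything else is the short bracket computation of the first paragraph and a re‑run of the argument of Proposition~\ref{tetra}.
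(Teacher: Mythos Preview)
Your overall strategy is sound and genuinely different from the paper's, but the part you flag as ``the real obstacle'' is indeed a gap, and your proposed workaround does not close it. You try to locate a second central element $z'\in\LL^{r'\d}\cap Z(\scg)$ with $\mathrm{sgn}(r')=-\mathrm{sgn}(j)$ acting nontrivially, and you appeal to the separation of injective and locally nilpotent roots along a $\d$-string ``recalled in the Introduction''. That separation is Theorem~\ref{hybrid-0}, which is stated for $\hh$-weight $\LL$-modules \emph{having shadow}; nothing tells you that the $\scl$-module $M$ has shadow, and in fact the conclusion of the present proposition is precisely that the string $(\a+\bbbz\d)\cap S_{re}$ is entirely locally nilpotent or entirely injective, so invoking the string separation here is circular. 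Likewise, Remark~\ref{rem-MMM} provides central elements in both degree signs, but gives no information about which of them act nontrivially on $M$; the hypothesis only hands you one such $z$.

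The fix is much simpler than what you attempt: strengthen your first paragraph from \emph{injectivity} to \emph{bijectivity} of $z$. The same submodule argument applied to $zM$ (which is $\scl$-stable since $d(zm)=z(dm+rm)$ and $z$ is central in $\scg$) shows $zM=M$, so $z$ is a $\scg$-module automorphism of $M$ with inverse $z^{-1}$ shifting weights by $-r\d$. Then $\supp(M)+r\bbbz\d=\supp(M)$, and your step~4 goes through with no case distinction on signs: from $\lam+t\b=\lam+t\a+tj\d\in\supp(M)$ one gets $\lam+t\a\in\supp(M)$ for every $t$ with $r\mid tj$, contradicting the support bound from your step~2. This is exactly the bijectivity the paper establishes at the outset of its proof.

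With that correction your argument is complete and takes a shorter, more elementary route than the paper. The paper instead uses bijectivity of $z$ to build a simple finite $\fh$-weight $\scg$-quotient $\bar M$ of $M$ and then exhibits an explicit $\scl$-isomorphism $M\cong \bar M\otimes\bbbc[t^{\pm1}]$ (the loop module), after which Proposition~\ref{tetra} applied to $\bar M$ gives the result. Their approach yields the extra structural information that $M$ is a loop module; yours avoids that construction entirely and deduces the statement directly from periodicity of $\supp(M)$ together with the Casimir argument of Proposition~\ref{tetra}.
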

\begin{proof} Fix $\lam\in \supp(M).$ Since $M$ is a simple $\scl$-module, we have $\supp(M)\sub \lam+\fh^*+\bbbz\d.$ Assume $z$ is as in the statement. Since $M$ is a simple $\scl$-module and  $\{zv\mid v\in M\}$ as well as  $ \{v\in M\mid zv=0\}$ are $\scl$-submodules of $M$, we have $\{zv\mid v\in M\}=M$ and $ \{v\in M\mid zv=0\}=\{0\}.$ Therefore, 
the map 
\begin{align*}
f:&M\longrightarrow M\\
&v\mapsto zv\quad\quad(v\in M)
\end{align*} 
is a $\scg$-module  isomorphism, say with the inverse $g.$ Now,
\[W:=\{v-f(v)\mid v\in M\}\] is an $\fh$-weight  $\scg$-submodule of $M$ which is invariant under $g$  and so we have 
\[M=W+(\Bigop{\dot\a\in \fh^*}{}\Bigop{i=0}{|r|-1}M^{\lam+\dot\a+i\d}).\] The quotient module $M/W$ is a nonzero  finite $\fh$-weight  $\scg$-module with
$\supp(M/W)\sub \lam\mid_\fh+\fh^*$ 
%\red{Since $M\neq 0$ and $f$ and $g$ are injective, there is $0\leq r_0\leq r-1$ such that $M^{\lam+\dot\a+r_0\d}$ is nonzero for some $\dot\a.$ If $0\neq v\in M^{\lam+\dot\a+r_0\d}$ and $v\in W,$ we have $v=\sum_i(w_i-zw_i)$, for some nonzero $w_i$'s in $M$ which is impossible due to $\bbbz$-gradation on $M$.}
and 
 $(M/W)^{\lam\mid_{\fh}+\dot\a}\sub\sum_{i=0}^{|r|-1}(M^{\lam+\dot\a+i\d}+W)/W$ for $\dot\a\in \fh^*$.
Since $M$ is a simple $\scl$-module, for a nonzero $\sch$-weight vector $v,$ we have 
\[M=U(\scl)v=U(\scg)U(\bbbc d)v=U(\scg)v,\] that is 
$M$ is cyclic $\scg$-module and so  $M/W$ is a cyclic $\scg$-module. Therefore, $M/W$ has a maximal submodule $U/W.$ So, 
\[\bar M:=M/U=(M/W)/(U/W)\] is a simple  finite  $\fh$-weight $\scg$-module. 
Suppose that $\bar{~~}: M\longrightarrow \bar M$ is the canonical projection map.  
We know that \[\scg=\Bigop{k\in\bbbz}{}\scg^k\hbox{ with } \scg^k=\left\{
\begin{array}{ll}
\fh& k=0,\\
\LL^{k\d}& k\in 2\bbbz+1.\\
\displaystyle{\sum_{\dot\a\in \dot S}\LL^{\dot\a+k\d}}&k\in2\bbbz\setminus\{0\}.
\end{array}
\right.
\]
Then, \[L(\bar M)=\bar M\ot \bbbc[t^{\pm1}]\] is a finite  $\sch$-weight  $\scl$-module via the action
 \begin{align*}
 \cdot:&\scl\times L(\bar M)\longrightarrow L(\bar M)\\
 &(x,\bar v\ot t^k)\mapsto \left\{
 \begin{array}{ll}
x\bar v\ot t^{s+k} & x\in\scg^{s} ~~~(s\in\bbbz), \\
 (\lam(d)+k) (\bar v\ot t^k) & x=d,
 \end{array}
 \right.
 \end{align*}
for  $v\in \bar M$ and $k\in\bbbz$.  
We have 
 %\[\bar M\ot \bbbc[t^{\pm1}]=\sum_{j\in\bbbz}\sum_{s\in \bbbz}\sum_{\dot\a\in \fh^*}(\overline{M^{\lam+\dot\a+ s\d}}\ot t^{s+j})\] and 
 \begin{align*}
 \psi: M\longrightarrow &\sum_{\dot\a\in \fh^*}\sum_{s\in \bbbz}(\overline{M^{\lam+\dot\a+s\d}}\ot t^{s})\sub L(\bar M),\\
 &v\mapsto  \bar v\ot t^{s}\quad\quad\quad\quad(v\in M^{\lam+\dot\a+s\d},~\dot\a\in \fh^*,~s\in \bbbz)
 \end{align*}
 is a nonzero  $\scl$-module  epimorphism and so an  $\scl$-module isomorphism as $M$ is simple.
This, together with  Proposition~\ref{tetra} implies that for $\a\in S_{re},$ $x\in \scl^{\a}$ acts locally nilpotently on $M$ if and only if $x\in \scl^{\a}$ acts locally nilpotently on $\bar M$ if and only if $x\in \scl^{\a+s\d}$ acts locally nilpotently on $\bar M$ for all $s\in \bbbz$ if and only if $x\in \scl^{\a+s\d}$ acts locally nilpotently on $M$ for all $s\in\bbbz.$
 \end{proof}
\subsubsection{Well-behaved interactions} 
Assume $\LL$ is a twisted affine Lie superalgebra with the standard Cartan subalgebra $\hh$ and corresponding root system $R$ as in Table~\ref{table1}.
 \begin{Thm}[{\cite[Theorem 4.8]{you8}}]\label{hybrid-0}
	\label{property}
Suppose that $V$ is an $\hh$-weight $\LL$-module having shadow. Then, for each $\b\in R_{re},$ one of the following will happen:
\begin{itemize}
\item[\rm (i)]  $(\b+\bbbz\d)\cap R\sub R^{ln}(V),$
\item[\rm (ii)] $(\b+\bbbz\d)\cap R\sub R^{in}(V),$
\item[\rm (iii)] there exist $r\in\bbbz$ and $t\in\{0,1,-1\}$ such that for $\gamma:=\b+r\d,$
\begin{align*}
&(\gamma+\bbbz^{\geq 1}\d)\cap R\sub R^{in}(V),\quad (\gamma+\bbbz^{\leq 0}\d)\cap R\sub R^{ln}\\
& (-\gamma+\bbbz^{\geq t}\d)\cap R  \sub R^{in}(V),\quad (-\gamma+\bbbz^{\leq t-1}\d)\cap R\sub R^{ln},
\end{align*}
\item[\rm (iv)] there exist $r\in\bbbz$ and $t\in\{0,1,-1\}$ such that for $\eta:=\b+r\d,$
\begin{align*}
&(\eta+\bbbz^{\leq -1}\d)\cap R\sub R^{in}(V),\quad (\eta+\bbbz^{\geq 0}\d)\cap R\sub R^{ln}(V),\\
& (-\eta+\bbbz^{\leq -t}\d)\cap R  \sub R^{in}(V),\quad (-\eta+\bbbz^{\geq 1-t}\d)\cap R\sub R^{ln}(V).
\end{align*}
\end{itemize}
\end{Thm}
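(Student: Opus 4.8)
First I would reduce the assertion to a combinatorial statement about two subsets of $\bbbz$. We may assume $V\neq\{0\}$, since otherwise each of (i)--(iv) holds vacuously. Fix $\b\in R_{re}$, write $\gamma_k:=\b+k\d$ and $J:=\{k\in\bbbz\mid\gamma_k\in R\}$; by Table~\ref{table1}, $J$ is a coset of $d_\b\bbbz$ for some $d_\b\in\{1,2,4\}$, it is unbounded above and below, and $-\gamma_k\in R$ iff $k\in J$. Since $(\d,R)=\{0\}$, every $\gamma_k$ and every $-\gamma_k$ ($k\in J$) is a real root of length $(\b,\b)$. Having shadow means, by (s1)--(s2), that $R_{re}=R^{in}(V)\sqcup R^{ln}(V)$ with $R^{in}(V)=C_V\cap R_{re}$ and $R^{ln}(V)=B_V\cap R_{re}$, a disjoint union because $B_V\cap C_V=\emptyset$ whenever $V\neq\{0\}$ (recall (\ref{BC})). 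Hence everything is governed by
\[I^+:=\{k\in J\mid \gamma_k\in C_V\}\andd I^-:=\{k\in J\mid -\gamma_k\in C_V\},\]
the complements $J\setminus I^\pm$ recording which members of the two strings are locally nilpotent. The goal is to show that, after possibly interchanging $\b$ and $-\b$, either $I^+=\emptyset$ (case (i)), or $I^+=J$ (case (ii)), or $I^+$ is an upper half-line of $J$ and $I^-$ the complementary lower half-line, whose union is $J$ or $J$ minus one element and whose intersection is empty or a single element (cases (iii)--(iv)).

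Next I would show $I^+$ and $I^-$ are intervals of $J$. Note that $C_V$ is an additive submonoid of $\sspan_\bbbz R$: it contains $0$, and $\mu,\nu\in C_V$ gives $(\mu+\nu)+\supp(V)\sub\mu+\supp(V)\sub\supp(V)$. If $j_1<j<j_2$ all lie in $J$ with $\gamma_{j_1},\gamma_{j_2}\in C_V$, then writing $\gamma_j=p\gamma_{j_1}+q\gamma_{j_2}$ with $p=\tfrac{j_2-j}{j_2-j_1}>0$, $q=\tfrac{j-j_1}{j_2-j_1}>0$, $p+q=1$, and clearing denominators yields positive integers $a,b,N$ with $N\gamma_j=a\gamma_{j_1}+b\gamma_{j_2}$; since $an\gamma_{j_1}+bn\gamma_{j_2}\in C_V$ for $n\ge0$, we get $\lam+nN\gamma_j\in\supp(V)$ for all $\lam\in\supp(V)$ and $n\ge0$, so $\gamma_j\notin B_V$, hence $\gamma_j\in C_V\cap R_{re}=R^{in}(V)$ by the shadow hypothesis, i.e. $j\in I^+$. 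The same argument on the line $-\b+\bbbz\d$ shows $I^-$ is an interval. If $I^+\in\{\emptyset,J\}$ we are in case (i) or (ii); so assume henceforth that $I^+$ is a nonempty proper sub-interval of $J$.

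The crucial step is a reflection argument. Set $L:=J\setminus(I^+\cup I^-)$. For $j\in L$, both $\gamma_j$ and $-\gamma_j$ lie in $R^{ln}(V)$, so a nonzero $e_j\in\LL^{\gamma_j}$ and $f_j\in\LL^{-\gamma_j}$ act locally nilpotently on $V$; hence $\varphi_j:=\exp(e_j)\exp(-f_j)\exp(e_j)$ is a well-defined element of $\Aut(V)$, and for the automorphism $\Theta_j:=\exp(\ad e_j)\exp(-\ad f_j)\exp(\ad e_j)$ of $\LL$, which induces the reflection $s_{\gamma_j}$ on roots, one has $\varphi_j(x\cdot v)=\Theta_j(x)\cdot\varphi_j(v)$. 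Consequently $\mu\in R^{ln}(V)\iff s_{\gamma_j}(\mu)\in R^{ln}(V)$, and likewise for $R^{in}(V)$. Because $(\gamma_k,\gamma_j)=(\b,\b)$ we have $s_{\gamma_j}(\gamma_k)=\gamma_k-2\gamma_j=-\gamma_{2j-k}$ for every $k\in J$, and feeding this into $\varphi_j$ gives $I^-=2j-I^+$ for each $j\in L$. Now if $L$ had two distinct elements $j_1,j_2$, then $2j_1-I^+=I^-=2j_2-I^+$ would force $I^+$ invariant under the nonzero translation $2(j_1-j_2)$, whence $I^+\in\{\emptyset,J\}$ — excluded; so $|L|\le1$. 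Similarly, if $I^+\cap I^-$ had two distinct elements $j_1<j_2$, then $\supp(V)$ would be invariant under each of $\pm\gamma_{j_1},\pm\gamma_{j_2}$, hence under $(j_2-j_1)\d$, hence under the whole coset $\{\gamma_{j_1}+m(j_2-j_1)\d\mid m\in\bbbz\}$ of string roots, forcing $I^+=J$ — again excluded; so $|I^+\cap I^-|\le1$. Here $I^+$ cannot be bounded, for then, $I^-$ being an interval, $J\setminus(I^+\cup I^-)$ would be infinite; so $I^+$ is a half-line and $I^-$ the complementary half-line of the opposite direction, and the overlap or gap is at most one element of $J$. Choosing $r\in\bbbz$ appropriately, this is exactly case (iii) (when $I^+$ is the upper half-line) or (iv) (the lower one), with $t=0$ when $I^+\cup I^-=J$ and $I^+\cap I^-=\emptyset$, $t=-1$ when they overlap in one element, and $t=1$ when there is a one-element gap.

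The main obstacle is making the reflection step fully rigorous in the twisted affine \emph{super} setting. Three points need care: (a) when $\gamma_j$ is an odd real root with $2\gamma_j\in R$, the pair $\LL^{\pm\gamma_j}$ generates an $\mathfrak{osp}(1|2)$ rather than an $\mathfrak{sl}_2$, so $s_{\gamma_j}$ must be integrated through its even part $\langle(\LL^{\gamma_j})^2,(\LL^{-\gamma_j})^2\rangle\cong\mathfrak{sl}_2$; (b) in type $A(2m,2n)^{(4)}$ one may have $\pi(\dot\a)=0$ for $\dot\a\neq0$ (see (\ref{special-case})) and the imaginary part carries the more intricate quadratic structure recorded in Remark~\ref{rem-MMM}, so the claims ``$\gamma_k$ real of fixed length'' and ``$\supp(V)$ invariant under $(j_2-j_1)\d$'' must be checked separately there; and (c), most basically, one must verify that a root vector acting locally nilpotently on $V$ together with its opposite genuinely integrates to an automorphism of $V$ intertwining the $\LL$-action with $\exp(\ad\,\cdot)$ — it is precisely here, and in the passage from ``$\gamma_j\notin B_V$'' to ``$\gamma_j\in R^{in}(V)$'' in the convexity step, that the \emph{shadow} hypothesis does its work. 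Granting these, the combinatorial skeleton above yields the four cases.
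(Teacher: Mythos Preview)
The paper does not contain a proof of this theorem: it is quoted verbatim from \cite[Theorem~4.8]{you8} and used as an input, with no argument supplied here. So there is nothing in the present paper to compare your proposal against.

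That said, your sketch is a plausible reconstruction of how such a result is proved. The reduction to the two index sets $I^\pm\sub J$, the convexity argument showing each is an interval (via the submonoid property of $C_V$ combined with the shadow hypothesis to pass from $\gamma_j\notin B_V$ to $\gamma_j\in C_V$), and the reflection trick $I^-=2j-I^+$ for $j\in L$ forcing $|L|\le 1$ are all the right moves, and your bound $|I^+\cap I^-|\le1$ is correct once one uses that $I^+$ is an interval to conclude from the unbounded arithmetic progression $\{j_1+m(j_2-j_1)\}\sub I^+$ that $I^+=J$. The obstacles you flag in (a)--(c) are genuine and are exactly the points where the argument in \cite{you8} does real work: integrating the reflection through $\mathfrak{osp}(1|2)$ when $\gamma_j$ is odd real with $2\gamma_j\in R$, and checking that locally nilpotent root vectors really exponentiate to automorphisms of $V$ intertwining with $\exp(\ad\,\cdot)$. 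Your point (b) about $A(2m,2n)^{(4)}$ is a red herring for this particular statement, however: the claim that every $\gamma_k=\b+k\d$ is real of length $(\b,\b)$ uses only $(\d,R)=\{0\}$, which holds in all types, and the quadratic structure of Remark~\ref{rem-MMM} concerns the imaginary root spaces, not the real-root string through $\b$.
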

{Assume $\dot\b\in\dot R_{re}$ and $\b\in R\cap (\dot\b+\bbbz\d).$ We call $\dot\b$ and $\b$ full-locally nilpotent,  full-injective,  down-nilpotent hybrid and up-nilpotent hybrid respectively if    for $\b$ properties stated in  (i) up to (iv)  occurs respectively.  
The set of full-locally nilpotent roots, full-injective roots and hybrid roots are denoted by $R_{\rm f-nil},$ $R_{\rm f-inj}$ and $R_{\rm hyb}$ respectively.}

{ A subset $S$ of $R$ is called   {\it full-locally nilpotent} (resp.  {\it full-injective},  {\it down-nilpotent hybrid}, {\it  up-nilpotent hybrid}, {\it hybrid}) if all elements of $S_{re}=S\cap R_{re}$ are full-locally nilpotent (resp. full-injective, down-nilpotent  hybrid, up-nilpotent  hybrid, hybrid).} 

\begin{deft}[{\cite[Definition~1]{you9}}]\label{quasi-}
{\rm Recall Table~\ref{table1} and set 

 \begin{equation}\label{r12}
 \begin{array}{l}
 \dot R(1):=\dot R\cap \sspan_\bbbz\{\ep_i\mid 1\leq i\leq m\},~~~  \dot R(2):=\dot R\cap \sspan_\bbbz\{\d_p\mid 1\leq p\leq n\} \andd \\
R(i):=(R^\times\cap (\dot R(i)+\bbbz\d))\cup\{k\d\in\bbbz\d\mid \exists \dot\a\in \dot R(i)^\times\hbox{ s.t. } \dot\a+k\d\in R\};~~(i=1,2).
 \end{array}
 \end{equation}
A simple finite $\hh$-weight  $\LL$-module is called {\it quasi-integrable} if for  some choice of $r,t$ with $\{1,2\}=\{r,t\},$
 \begin{itemize}
\item all real roots of $R(t)$ are full-locally nilpotent,
\item all real roots of $ R(r)$ are hybrid.
\end{itemize}
}
\end{deft}

  Using a    modification of the proof of \cite[Lemmas 5.6 \& 5.7]{you8}, one gets the following two lemmas:
    
\begin{lem}\label{lem:up and down}  {Assume $V$ is a weight $\LL$-module having shadow. Assume   $S$ is a symmetric closed subset of $R$ with  $S_{re}\neq \emptyset$ and ${(S+\bbbz\d)\cap R_0}\sub S.$ If
      $S$ is hybrid, then   either all real roots of $S$ are up-nilpotent hybrid or all are down-nilpotent hybrid.}
  \end{lem}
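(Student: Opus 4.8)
The plan is to argue by contradiction: suppose $S$ is hybrid but contains a real root $\b$ that is up-nilpotent hybrid and a real root $\b'$ that is down-nilpotent hybrid. The idea is to produce from these two a real root in $S$ whose behaviour under $V$ violates Theorem~\ref{property} (shadow), or else to contradict the closedness of $S$ together with the condition $(S+\bbbz\d)\cap R_0\sub S$. First I would recall what up- and down-nilpotent hybrid mean concretely for $\b$ and $\b'$: there exist $r,t$ (resp. $r',t'$) and root $\eta=\b+r\d$ (resp. $\gamma=\b'+r'\d$) so that the injective roots in the string through $\b$ lie in the ``$\leq -1$'' direction while those through $\b'$ lie in the ``$\geq 1$'' direction; in particular the ``locally nilpotent tails'' of the two strings point in opposite directions along $\bbbz\d$.

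The key step is to play the two $\mathfrak{sl}_2$-type arguments against each other. Using the shadow conditions (s1), (s2) — that $\Phi^{ln}(V)=B_V\cap\Phi_{re}$ and $\Phi^{in}(V)=C_V\cap\Phi_{re}$ — I would translate local nilpotence / injectivity of a root $\a$ into the combinatorial statements ``$\lam+k\a$ eventually leaves $\supp(V)$ as $k\to+\infty$'' resp. ``$\a+\supp(V)\sub\supp(V)$''. Then, as in the proof of \cite[Lemmas 5.6 \& 5.7]{you8}, I would pick a weight $\lam\in\supp(V)$, follow the string $\lam+k\b$ in the injective direction for $\b$ and simultaneously use that some $\pm\b'+ s\d$ acts injectively on the vectors so obtained; combining the two injective actions moves the weight in a direction forbidden by the locally nilpotent tail of the other root's string (because the tails point oppositely), giving an infinite subset of $\supp(V)$ inside a single coset $\lam+\bbbz\d + \bbbz(\text{something})$ that should be finite — the contradiction. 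The hypotheses $S=-S$, $(S+S)\cap R\sub S$ and $(S+\bbbz\d)\cap R_0\sub S$ are exactly what is needed to guarantee that the roots $\pm\b\pm\b'+s\d$ appearing in these brackets and weight shifts actually lie in $S$ (or at least in $R$), so that Theorem~\ref{property} applies to them.

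The main obstacle I anticipate is bookkeeping the $\bbbz\d$-shifts carefully: the root strings in a twisted affine superalgebra are not simply $\dot\a+\bbbz\d$ but of the restricted form $(\dag)$, i.e. $\dot\a+k_{\dot\a}\d+\bbbz r_{\dot\a}\d$ with $r_{\dot\a}\in\{1,2,4\}$, and the ``shadow'' dichotomy of Theorem~\ref{property} already fixes a crossover point $\gamma$ with a small offset $t\in\{0,\pm1\}$. So the delicate part is to check that when one forms $\b+\b'$ (or $\b-\b'$) and shifts, the resulting root genuinely lands on the injective side of one string but is forced onto the locally nilpotent side of the other, uniformly over the infinitely many shifts — rather than getting stuck in the finite ``transition zone'' near the crossover. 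I would handle this by choosing the starting weight $\lam$ generically (as in (\ref{star})/(\ref{zero-1}): pick $\lam$ so that $\lam+\dot\gamma+\bbbz r\d\not\in\supp(V)$ for the relevant $\dot\gamma$), which pushes all the relevant weights far away from the transition zones and makes the opposing-direction contradiction clean. The remaining cases (e.g. when $\b$ and $\b'$ differ only by an element of $\bbbz\d$, so they lie in the same string, or when $\b+\b'\notin R$) are either immediate from the definition of hybrid or vacuous, and I would dispose of them first.
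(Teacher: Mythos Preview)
Your proposal has a genuine gap. You dismiss the case $\b+\b'\notin R$ (and $\b-\b'\notin R$) as ``vacuous'', but this is precisely the hard case. The finite root system $\dot S$ may well have several mutually orthogonal irreducible components $\dot S(1),\ldots,\dot S(n)$, and nothing prevents a priori that $\b$ lies over $\dot S(1)$ and $\b'$ over $\dot S(2)$. Then $\dot\b\pm\dot\b'$ is never in $\dot R$, so $[\LL^{\b+s\d},\LL^{\pm\b'+s'\d}]=0$ for all $s,s'$, and your ``play the two strings against each other via brackets'' mechanism produces nothing. Your appeal to (\ref{star})/(\ref{zero-1}) to choose a generic $\lam$ is also misplaced: those statements are proved for \emph{integrable} modules (all real root vectors locally nilpotent), not for hybrid ones.

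The paper's argument (a modification of \cite[Lemma~5.6]{you8}) is organised differently and in two stages. First, set
\[
K_1:=\{\a\in S_{re}\mid \exists N,\ (\a+\bbbz^{\geq N}\d)\cap R\sub R^{ln}(V)\},\quad
K_2:=\{\a\in S_{re}\mid \exists N,\ (\a+\bbbz^{\geq N}\d)\cap R\sub R^{in}(V)\},
\]
so that $K_1$ consists of the up-nilpotent hybrid roots and $K_2$ of the down-nilpotent ones. Using Theorem~\ref{property} together with \cite[Theorem~4.7, Lemma~3.5]{you8} one checks that $K_1$ and $K_2$ are \emph{symmetric closed} subsets of $S_{re}$. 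Restricted to any irreducible component $\dot S(i)$ this forces $(\a,\b)=0$ whenever $\a\in K_1\cap S(i)$ and $\b\in K_2\cap S(i)$, hence one of the two intersections is empty: each $S(i)$ is entirely up or entirely down. Second, to rule out different directions on different components one does \emph{not} combine $\b$ and $\b'$; instead one uses that $\gG(j)$ (Proposition~\ref{1}(b)) is, up to a centre, an affine Lie algebra, so there exist $\mu\in\supp(V)$ and $p>0$ with $(\mu+p\bbbz^{>0}\d)\cap\supp(V)=\emptyset$. If $S(j)$ were down-nilpotent hybrid one would have $\pm\b+m'rp\d\in R^{in}$ for large $m'$, and the injectivity then forces $\mu+mrp\d\in\supp(V)$, a contradiction; a further bracket identity $\LL^{\b+mrp\d}=[\LL^{\b-mrp\d},\LL^{2mrp\d}]$ finishes the argument. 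This cross-component step is exactly what your plan is missing.
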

   \begin{lem}\label{lem:exist functional}
Keep the same notations and assumptions as in Lemma~\ref{lem:up and down}, and define
\begin{align*}
P:= \begin{cases}
S^{ln}(V)\cup -S^{in}(V)\cup  (\bbbz^{\geq0}\d\cap S)
& \mbox{if $S$ is up-nilpotent hybrid,}\\
S^{ln}(V)\cup -S^{in}(V)\cup 
(\mathbb{Z}^{\leq 0}\delta \cap S)& \mbox{if $S$ is down-nilpotent hybrid}.
\end{cases}\label{I}
\end{align*}
Then, there exists a functional $\boldsymbol{\zeta}: \text{\rm span}_{\mathbb{R}} ({S\setminus R_{ns}}) \rightarrow \mathbb{R}$ such that
  	$$P=\{\alpha \in S\setminus R_{ns}\; |\; \boldsymbol{\zeta}(\alpha) \geq 0\}.$$
  	In particular, $$\{\a\in S\cap R_{re}\; |\; \boldsymbol{\zeta}(\alpha)>0\}\subseteq S^{ln}(V) \quad \text{and}\quad \{\a\in S\cap R_{re} \;|\; \boldsymbol{\zeta}(\alpha)<0\}\subseteq S^{in}(V).$$
\end{lem}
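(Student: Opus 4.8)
The plan is to construct the functional $\boldsymbol{\zeta}$ explicitly on $\mathrm{span}_\bbbr(S\setminus R_{ns})$ and check it cuts out $P$ as claimed. By Lemma~\ref{lem:up and down} we may assume, say, that $S$ is up-nilpotent hybrid (the down-nilpotent case is identical up to sign). The first step is to record, for each real root $\dot\b\in \dot S_{re}$, the shape of the ``interaction pattern'' on the line $(\dot\b+\bbbz\d)\cap R$: by Theorem~\ref{hybrid-0}, each such line is either fully locally nilpotent, fully injective, or genuinely hybrid, and in the hybrid case there is a single crossover point $\gamma_{\dot\b}=\dot\b+r_{\dot\b}\d$ such that all injective roots lie above it and all locally nilpotent roots below it (together with the corresponding statement for $-\gamma_{\dot\b}$, with the shift $t\in\{0,\pm1\}$). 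Since $S$ is hybrid, case~(i) or the hybrid cases~(iii)/(iv) occur; Lemma~\ref{lem:up and down} forces the hybrid lines all to be of the up-nilpotent type, which is what makes a \emph{single} global linear functional possible rather than a piecewise one.

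The key construction is then: choose a linear functional $\boldsymbol{f}\colon \mathrm{span}_\bbbr \dot R\to\bbbr$ separating the ``nilpotent side'' from the ``injective side'' of the projections $\dot S_{re}\to \dot R$, i.e. with $\boldsymbol{f}(\dot\b)>0$ whenever $\dot\b$ (hence $-\dot\b$) contributes locally-nilpotent roots ``on its positive side,'' and set $\boldsymbol{\zeta}(\dot\a+k\d):=\boldsymbol{f}(\dot\a)+\eps k$ for a sufficiently small $\eps>0$ and the crossover data absorbed into $\boldsymbol{f}$ via a shift. More precisely: for each $\dot\b$ whose line is hybrid with crossover $\gamma_{\dot\b}=\dot\b+r_{\dot\b}\d$, one wants $\boldsymbol{\zeta}$ to vanish at $\gamma_{\dot\b}$ up to the $\eps$-correction, i.e. $\boldsymbol{\zeta}(\dot\b+k\d)>0 \iff k\le r_{\dot\b}$ (this is exactly statement (iii) after passing to $V^{ln}$); for fully locally nilpotent lines one wants $\boldsymbol{\zeta}>0$ on the whole line, and for fully injective lines $\boldsymbol{\zeta}<0$ on the whole line. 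Because hybridity is invariant along $\bbbz\d$-cosets and the set of real roots is finite modulo $\bbbz\d$, there are only finitely many constraints on $\boldsymbol{f}$ and on the crossover shifts; a standard ``general position'' / Farkas-type argument (exactly as in \cite[Lemmas 5.6 \& 5.7]{you8}) produces a suitable $\boldsymbol{f}$, and then picking $\eps>0$ small compared to $\min\{|\boldsymbol{f}(\dot\b)| : \dot\b\in\dot S_{re}\}$ guarantees the $\eps k$ perturbation never changes a strict sign of $\boldsymbol{f}$, while still resolving the crossover correctly on each hybrid line. One also has to check consistency at $0\neq k\d\in S_{im}$: here $\boldsymbol{f}$ is $0$, so $\boldsymbol{\zeta}(k\d)=\eps k$, which has the sign of $k$ — matching the term $\bbbz^{\geq 0}\d\cap S$ in $P$ (for the up-nilpotent case). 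Finally, $R_{ns}$ is excluded from the domain precisely so that the self-orthogonal (nonsingular) roots, whose interaction behaviour is not controlled by Theorem~\ref{hybrid-0}, impose no constraints.

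Once $\boldsymbol{\zeta}$ is fixed, verifying $P=\{\a\in S\setminus R_{ns}\mid \boldsymbol{\zeta}(\a)\ge 0\}$ is a case check against the definition of $P$: a real root $\a$ lies in $S^{ln}(V)$ iff it lies weakly above the crossover on its line iff $\boldsymbol{\zeta}(\a)>0$ (by construction, with the $\eps$ correction handling the boundary $k=r_{\dot\b}$); $-\a$ for $\a\in S^{in}(V)$ corresponds to $\boldsymbol{\zeta}(-\a)\ge 0$, using the matching of the shift $t$ with the value of $\boldsymbol{\zeta}$ at $-\gamma_{\dot\b}$; and the imaginary part $\bbbz^{\geq0}\d\cap S$ is caught by $\boldsymbol{\zeta}(k\d)=\eps k\ge 0\iff k\ge0$. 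The inclusions $\{\boldsymbol{\zeta}>0\}\cap R_{re}\subseteq S^{ln}(V)$ and $\{\boldsymbol{\zeta}<0\}\cap R_{re}\subseteq S^{in}(V)$ are then immediate from the equality, since the only real roots with $\boldsymbol{\zeta}=0$ would have to sit exactly at a crossover, and the $\eps$-shift was chosen to make $\boldsymbol{\zeta}$ nonzero on every real root.

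**Main obstacle.** The delicate point is not the final verification but showing a \emph{single} functional works simultaneously for all lines: one must rule out an obstruction where the crossover shifts $r_{\dot\b}$ on different $\bbbz\d$-cosets are ``incompatible'' with any global linear $\boldsymbol{f}+\eps k$. This is where Lemma~\ref{lem:up and down} (all hybrid roots of the same up/down type) and the closedness of $S$ are essential — they force the crossover data to be coherent (monotone under addition of roots), so that the finite system of linear inequalities defining admissible $(\boldsymbol{f},\eps)$ is consistent. Reconstructing this coherence argument carefully, by transporting the proof of \cite[Lemmas 5.6 \& 5.7]{you8} to the present setting with the extra $R_{ns}$-exclusion, is the technical heart of the proof.
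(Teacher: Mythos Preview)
Your overall recognition that the heart of the matter is the \emph{coherence} of the crossover data across all $\bbbz\delta$-cosets is correct, and in the end you (like the paper) fall back on \cite[Lemmas~5.6~\&~5.7]{you8}. However, the specific construction you sketch is muddled in several places and would not work as written.

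First, since $S$ is hybrid by hypothesis, \emph{every} real root of $S$ is hybrid: there are no full-locally-nilpotent or full-injective lines in $S$. Your discussion of wanting ``$\boldsymbol{\zeta}>0$ on the whole line'' for such lines is therefore irrelevant here, and in fact impossible to achieve simultaneously with $\boldsymbol{\zeta}(\delta)\neq 0$, since $\boldsymbol{f}(\dot\beta)+\eps k$ changes sign for large $|k|$ whenever $\eps\neq 0$. Second, your displayed criterion ``$\boldsymbol{\zeta}(\dot\beta+k\delta)>0\iff k\le r_{\dot\beta}$'' has the wrong direction for the up-nilpotent case (you are quoting case~(iii), which is down-nilpotent; in the up-nilpotent case (iv) the locally nilpotent roots sit at $k\geq r_{\dot\beta}$). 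Third, and most seriously, your ``choose $\eps$ small compared to $\min|\boldsymbol{f}(\dot\beta)|$'' argument is circular: in the hybrid setting $\boldsymbol{f}(\dot\beta)$ must itself be chosen of order $\eps\cdot r_{\dot\beta}$ to hit the crossover, so there is no separation of scales to exploit. What you actually need is that the assignment $\dot\beta\mapsto r_{\dot\beta}$ extends (up to a bounded error absorbed by the $t\in\{0,\pm1\}$ parameter) to a linear functional on $\mathrm{span}_\bbbr\dot S_{re}$ --- and you never establish this.

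The paper's argument (a direct modification of \cite[Lemma~5.7]{you8}) avoids these difficulties by working with the affine root-system structure rather than crossover values. One first passes to $S\cap R_0$, decomposes $\dot S$ into irreducible components $\dot S(1),\ldots,\dot S(n)$, and observes (via Proposition~\ref{1}) that each component gives rise to an affine Lie subalgebra with its own affine simple system $\Pi_i=\dot\Pi_i\cup\{s_i\delta-\theta_i\}$. The key step, drawn from \cite[Remark~5.5]{you8}, is that the base $\dot\Pi_i$ can be chosen so that $\Pi_i\subseteq P_i$; this is exactly where the closedness of $S^{ln}$ and the uniform up/down type from Lemma~\ref{lem:up and down} enter. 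Once $\Pi_i\subseteq P_i$, the functional $\boldsymbol{\zeta}$ is defined directly on the basis $B=\bigcup_i\Pi_i\setminus\{s_2\delta-\theta_2,\ldots,s_n\delta-\theta_n\}$ by prescribing it to be positive on simple roots in $P\setminus(-P)$ and zero on simple roots in $P\cap(-P)$, with the values adjusted across components so that the forced values $\boldsymbol{\zeta}(s_i\delta-\theta_i)$ come out with the correct sign. This sidesteps any need to analyze the crossovers $r_{\dot\beta}$ individually: the parabolic structure of $P$ is encoded in the affine simple system, and linearity of $\boldsymbol{\zeta}$ is automatic from defining it on a basis.
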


 \begin{Pro}\label{1}
 	Suppose that  $S$ is a nonempty symmetric closed  subset of $R$ with $S_{re}=R_{re}\cap S\neq \emptyset$. Set \[S_0:= S \cap R_{0} ~ \hbox{ ~ as well as ~} ~ \dot{S}_0:=\{\dot{\alpha}\in {\dot R} \mid (\dot\a+\bbbz\d)\cap S_0\neq \emptyset\}\] and assume that \[(\dot{\alpha}+\mathbb{Z}\delta) \cap R_0 \subseteq S \quad\quad (\dot{\alpha} \in \dot S_0^\times=\dot S_0\setminus\{0\}).\]
Then the following statements hold:
\begin{itemize}
\item[(a)]	$\dot{S}_0$ is a finite root system, say e.g. with irreducible components  $\dot{S}(1),\ldots,\dot{S}({s}).$
\item[(b)]	  For $1\leq i\leq s,$ set  
$S{(i)}:={(\dot{S}{(i)}}+\mathbb{Z}\delta)\cap R_0$ and recall (\ref{decom-1}).
Then \[\mathcal{G}{(i)}:=\hh+\summ{\alpha \in S{(i)}^\times}\mathcal{L}^{\alpha}+\summ{\alpha,\b \in S{(i)}^\times}[\LL^\a,\LL^{\b}]\] is an affine Lie algebra, up to a central space. 	
\item[(c)]  Assume $V$ is a nonzero finite weight module over \[\mathcal{G}_{_S}:=\sum_{\alpha \in S}\mathcal{L}^{\alpha}\] such that $c$ acts trivially on $V$ and 
$S^{ln}(V)=S_{re}.$ 
Fix a base $\D$ of the finite root system  $\dot S_0$.  Assume $\bs{\zeta}$ is a functional on $\sspan_\bbbr \dot S_0$ with $\bs{\zeta}(\D)\sub \bbbr^{>0}$. Then, there exist $\mu \in \supp(V)$ and  $0\neq v \in V^{\mu}$ such that 
\[\LL^{\dot \a+r\d} v=\{0\}\quad\quad (r\in\bbbz,~\dot\a\in \dot S_0,~\bs{\zeta}(\dot\a)>0).\]
 \end{itemize}
 \end{Pro}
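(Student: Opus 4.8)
The plan is to settle parts (a) and (b) by general structure theory and to deduce (c) by iterating Theorem~\ref{affine-int} over the irreducible components of $\dot S_0$.

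For part (a): since the bracket of two even root vectors is again even, $R_0$ is a symmetric closed subset of $R$, and therefore so is $S_0=S\cap R_0$; note also that the non-zero horizontal part of an even root is automatically real, isotropic roots being odd. A symmetric closed subset $T$ of $R_0$ is stable under its own reflections: for a real root $a\in T$ and any $b\in T$, the element $r_a(b)$ lies on the (unbroken) $a$-string through $b$, and consecutive roots on the segment from $b$ to $r_a(b)$ differ by $a$, so, using $\pm a\in T$, each of them lies in $(T+T)\cap R\sub T$ inductively. Applying this to $S_0$ and passing to horizontal parts yields $r_{\dot\alpha}(\dot\beta)\in\dot S_0$ for all $\dot\alpha,\dot\beta\in\dot S_0$; together with $\dot S_0=-\dot S_0$ this exhibits $\dot S_0$ as a finite sub-root-system of $\dot R$, so it has irreducible components $\dot S(1),\dots,\dot S(s)$.

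For part (b): the even part $\LL_0$ is, up to a central ideal, a direct sum of (possibly twisted) affine Kac--Moody algebras; by the hypothesis $(\dot\alpha+\bbbz\d)\cap R_0\sub S$ for $\dot\alpha\in\dot S(i)$, the set $S(i)=(\dot S(i)+\bbbz\d)\cap R_0$ comprises \emph{all} $\bbbz\d$-translates in $R_0$ of the roots of the irreducible finite system $\dot S(i)$, so the subalgebra $\mathcal{G}(i)$ of $\LL$ that it (together with $\hh$) spans and generates is one such affine summand modulo a finite-dimensional central ideal $Z_i$ (the extra central directions being inherited from $\LL_0$); write $\mathcal{G}(i)=\ft_i\op Z_i$ with $\ft_i$ affine, the canonical central element of $\ft_i$ being a scalar multiple of $c$ (see \cite{KY} for the explicit description of $\LL_0$). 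Finally, distinct irreducible components of $\dot S_0$ are orthogonal, so $\dot\alpha+\dot\beta$ is never an even root for $\dot\alpha\in\dot S(i)$, $\dot\beta\in\dot S(j)$ with $i\neq j$, whence $[\mathcal{G}(i),\mathcal{G}(j)]\sub\hh$.

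For part (c): regarded as a module over each $\mathcal{G}(i)$, $V$ is a non-zero finite $\hh$-weight module on which $c$ acts trivially and, since $S^{ln}(V)=S_{re}$, every real root vector acts locally nilpotently. Write $\D=\D_1\sqcup\cdots\sqcup\D_s$ with $\D_i$ a base of $\dot S(i)$; then $\bs\zeta$ is positive on each $\D_i$. Put $V_0:=V$ and assume inductively that $V_{i-1}\neq\{0\}$ is a finite $\hh$-weight module over $\mathcal{G}(i)+\cdots+\mathcal{G}(s)$ sharing the two properties above. Theorem~\ref{affine-int}, applied to $\fk=\mathcal{G}(i)=\ft_i\op Z_i$ with base $\D_i$ and the restriction of $\bs\zeta$, yields a non-zero weight vector of $V_{i-1}$ killed by $\mathcal{G}(i)^{\dot\alpha+q\d}$ for all $q\in\bbbz$ and all $\dot\alpha\in\dot S(i)$ with $\bs\zeta(\dot\alpha)>0$; hence the subspace
\[V_i:=\{v\in V_{i-1}\mid \LL^{\dot\alpha+q\d}v=\{0\}\ \text{for all}\ q\in\bbbz,\ \dot\alpha\in\dot S(i),\ \bs\zeta(\dot\alpha)>0\}\]
is non-zero. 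As $[\mathcal{G}(i),\mathcal{G}(j)]\sub\hh$ for $j>i$ and $\hh$ preserves each $\mathcal{G}(i)$-root space, $V_i$ is stable under $\mathcal{G}(i+1)+\cdots+\mathcal{G}(s)$ (for $y\in\mathcal{G}(i)^{\dot\alpha+q\d}$, $x\in\mathcal{G}(j)$, $v\in V_i$ one has $y(xv)=x(yv)+[y,x]v=0$), and it clearly inherits the standing properties, so the induction continues and terminates with $V_s\neq\{0\}$. Choose any non-zero $v\in V_s$, say $v\in V^\mu$. For $\dot\alpha\in\dot S_0$ with $\bs\zeta(\dot\alpha)>0$ we have $\dot\alpha\neq0$, so $\dot\alpha$ lies in a unique $\dot S(i)$ and $\LL^{\dot\alpha+r\d}$ (when non-zero) is the corresponding root space of $\mathcal{G}(i)$; therefore $\LL^{\dot\alpha+r\d}v=\{0\}$ for all $r\in\bbbz$, which is the assertion.

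The delicate point is part (b): one must know, from the explicit structure of the even part of a twisted affine Lie superalgebra, that each $\mathcal{G}(i)$ is an affine Kac--Moody algebra modulo a central ideal and that the corresponding canonical central element acts trivially on $V$, so that Theorem~\ref{affine-int} — stated precisely for $\fk=\ft\op Z$ — can be invoked without modification at every stage. Once (b) and the orthogonality relation $[\mathcal{G}(i),\mathcal{G}(j)]\sub\hh$ are available, the iteration in (c) is a routine bookkeeping argument.
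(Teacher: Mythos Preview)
Your overall architecture matches the paper's: part (a) via reflections/root strings, part (c) by iterating Theorem~\ref{affine-int} across the irreducible components of $\dot S_0$. Two points deserve correction, one of which is a genuine gap.

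\textbf{Part (b).} You argue that $\mathcal{G}(i)$ is ``one such affine summand'' of $\LL_0$ modulo centre. This is not justified: $\dot S(i)$ is an irreducible component of the sub-root-system $\dot S_0$, not of $\dot R_0$, and in the applications (e.g.\ the proof of Theorem~\ref{main}, where $S=T(1)$ consists only of full-locally-nilpotent roots) $\dot S_0$ can be a proper sub-root-system of a single irreducible component of $\dot R_0$. So $\mathcal{G}(i)$ need not coincide with any direct summand of $\LL_0$. The paper avoids this by arguing intrinsically: it shows that the form is nondegenerate on $\fh(i):=\bbbc c\oplus\bbbc d\oplus\sum_{\dot\a\in\dot S(i)^\times}\bbbc t_{\dot\a}$, splits $\hh=\fh(i)\oplus\fh'(i)$ with $\fh'(i)$ central in $\mathcal{G}(i)$, verifies that $\fk(i):=\fh(i)+\sum_{\a\in S(i)^\times}\LL^\a+\sum[\LL^\a,\LL^\b]$ has the right root-space decomposition, and then invokes the characterization of affine Lie algebras from \cite{pin}. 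Your appeal to the global structure of $\LL_0$ does not cover this, so as written (b) has a gap.

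\textbf{Part (c).} The iteration is the same as the paper's, but your commutation statement $[\mathcal{G}(i),\mathcal{G}(j)]\sub\hh$ is false: both algebras contain $\hh$ (so $[\hh,\mathcal{G}(j)]=\mathcal{G}(j)$) and both contain imaginary root spaces $\LL^{k\d}$, whose bracket with $\LL^{\dot\b+s\d}$ lands in $\LL^{\dot\b+(k+s)\d}\not\sub\hh$. What you actually need, and what the paper uses, is only that for $\dot\alpha\in\dot S(i)^\times$ and $\dot\beta\in\dot S(j)^\times$ with $i\neq j$ one has $\dot\alpha+\dot\beta\notin\dot R_0$, hence $[\LL^{\dot\alpha+q\d},\LL^{\dot\beta+s\d}]=0$; together with the observation that $[\LL^{\dot\alpha+q\d},\hh]$ and $[\LL^{\dot\alpha+q\d},\LL^{k\d}]$ lie again in $\sum_r\LL^{\dot\alpha+r\d}$ (so still annihilate $v\in V_i$), this gives the stability of $V_i$ under $\mathcal{G}(j)$. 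Note also that ``orthogonal'' alone does not force $\dot\alpha+\dot\beta\notin\dot R_0$ (e.g.\ $\ep_1+\ep_2$ in type $B$); the point is that $\dot S_0$ is closed, so if $\dot\alpha+\dot\beta\in\dot R_0$ then $\dot\alpha+\dot\beta\in\dot S_0$ is non-orthogonal to both $\dot\alpha$ and $\dot\beta$, contradicting $i\neq j$.
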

\begin{proof}
		(a) is easily verified; just note that as $2(R_1\cap R_{re})\sub R_0, $ we can deduce that $S_{0}\cap R_{re}\neq\{0\}$ and so $\dot S_0\neq \{0\}.$
\begin{comment}
It is enough to show that $r_{\dot{\alpha}}(\dot{\beta}):=\dot\b-\frac{2(\dot\b,\dot\a)}{(\dot\a,\dot\a)}\dot\a\in \dot{S},$ for all $ \dot{\alpha},\dot{\beta}  \in \dot{S}\setminus\{0\}.$  Fix $\dot{\alpha},\dot{\beta}  \in \dot{S}$. There exist $k(1), k(2)\in \mathbb{Z}$ such that $\alpha=\dot{\alpha}+k(1)\delta,\beta=\dot{\beta}+k(2)\delta \in S_{0} .$ From "no hole property" of $R$ there exist nonnegative integers $p,q$ such that $q-p=\frac{2(\dot{\beta},\dot{\alpha})}{(\dot{\alpha},\dot{\alpha})}$ and $\{\beta-p\alpha, \ldots , \beta+q\delta\} \subseteq R$. Since $S$ is symmetric closed, this set  is also a subset of $S_{0}.$  From $$r_{\alpha}(\beta)+p\alpha=r_{\alpha}(\beta -p\alpha)=\beta+q\alpha,$$ we get $r_{\alpha}(\beta)=\beta +(q-p)\alpha$ is in $S_{0}$. So $r_{\dot{\alpha}}(\dot{\beta}) \in \dot{S}.$
\end{comment}

(b) Recall (\ref{talpha}). We know  that the form $\fm$ is nondegenerate on $\bbbc c\op \bbbc d$ as well as on $\hh$ and on $\summ{\dot\a\in \dot S(i)^\times}\bbbc t_{\dot\a}$. Also, we have $(\bbbc c\op \bbbc d,\summ{\dot\a\in \dot S(i)^\times}\bbbc t_{\dot\a})=\{0\}$.  Setting 
\[\fh(i):=(\bbbc c\op \bbbc d)+\sum_{\a\in S(i)}\bbbc t_\a=\bbbc c\op \bbbc d\op\sum_{\dot\a\in  \dot S(i)^\times}\bbbc t_{\dot\a},\] we get that the form is nondegenerate on $\fh(i)$ and so, there is an orthogonal complement $\fh'(i)$ for $\fh(i)$ in $\hh;$ in particular,   for $\a\in S(i)$, $x\in \LL^\a$ and $h\in \fh'(i),$ we have 
\begin{align*}
[h,x]=\a(h)x=(t_\a,h)x\in(\fh(i),\fh'(i))x=\{0\},
\end{align*}
that is, $\fh'(i)$ is contained in the center of $\gG(i).$ We claim that 
\[\fk(i):=\fh(i)+\summ{\alpha \in S{(i)}^\times}\mathcal{L}^{\alpha}+\summ{\alpha,\b \in S{(i)}^\times}[\LL^\a,\LL^{\b}]\] is an affine Lie algebra. 

To this end, we first  note that  for each element $\a\in S(i),$ we have $\a(\fh'(i))=\{0\}$. So, for distinct elements  $\a,\b\in S(i),$ the restriction  of $\a$ to $\fh(i)$ is different from the restriction of  $\b$ to $\fh(i).$ This, in particular, implies that the decomposition  
\[\fk(i)=\fh(i)\op\Bigop{\alpha \in S{(i)}^\times}{}\mathcal{L}^{\alpha}\op\Bigop{0\neq k\in\bbbz}{}(\sum_{ \dot\a\in \dot S(i)^\times}\sum_{t\in\bbbz}[\LL^{\dot\a+t\d},\LL^{-\dot\a+(k-t)\d}])\] coincides with  the root space decomposition of $\fk(i)$ with respect to $\fh(i).$ It is now easily verified that  for 
\[(\fk(i))_c:=\sum_{ \a\in {S(i)^\times}}\LL^\a\op\sum_{ \dot\a\in \dot S(i)^\times}\sum_{k,k'\in\bbbz}[\LL^{\dot\a+k\d},\LL^{-\dot\a+k'\d}],\] we have 
\[\fk(i)=(\fk(i))_c\op\bbbc d\] and that $\{x\in \fk(i)\mid (x,(\fk(i))_c)=\{0\}\}\sub (\fk(i))_c.$
This together with  \cite[Theorem 2.32]{pin} implies that $\fk(i)$ is an affine Lie algebra, as we desired.

(c) By part~(b), for each $1\leq i\leq s$,  $\mathcal{G}{(i)}$ is an affine  subalgebra of $\mathcal{G}_{_S},$ up to a central subspace.	Set $V(0):=V$ and for $1\leq i\leq s,$ define
	\[V(i):=\text{span}_\bbbc\{v\in V(i-1) \mid \LL^{\dot\a+k\d} v=\{0\}~ (\dot\a\in \dot S(i),~\bs{\zeta}(\dot\a)>0, ~k\in\bbbz)\}.\]  
It is easy to see that  for $0\leq i\leq s-1,$ $V(i)$ is a finite weight $\gG(i+1)$-module. 
\begin{comment}
To show this, it is enough to assume $\dot\b\in \dot S(2)^\times$ and $m\in\bbbz$ and prove that $\LL^{\dot\b+m\d}V(1)\sub V(1).$  We assume  $\dot\a\in \dot S(1)$ with $\bs{\zeta}(\dot\a)>0$ as well as  $n\in\bbbz$ and show that $\LL^{\dot\a+n\d}\LL^{\dot\b+m\d}V(1)=\{0\}.$
 We first note that  as $\dot\b\neq 0,$   $\dot\a+\dot\b$ is not a root and so $[\LL^{\dot\a+n\d},\LL^{\dot\b+m\d}]=\{0\}$. Therefore, we have  	\begin{align*}
	\LL^{\dot\a+n\d}\LL^{\dot\b+m\d}V(1)&\sub [\LL^{\dot\a+n\d},\LL^{\dot\b+m\d}]V(1)+\LL^{\dot\b+m\d}{\LL^{\dot\a+n\d}V(1)}\\
	&=[\LL^{\dot\a+n\d},\LL^{\dot\b+m\d}]V(1)
	=\{0\} .		
	\end{align*}
	This completes the proof of the claim. 
	\end{comment}
	 In particular, using Theorem~\ref{affine-int} together with an induction process, we get   $V(i+1)\neq \{0\}.$ In particular $V(s)\neq\{0\}.$
	This completes the proof.
	\end{proof}

\section{Main results}
Throughout this section, we follow the notations of Subsection~\ref{affine}; specially,  we 
assume $\LL$  is a twisted  affine Lie superalgebra of type $X$ with $\LL_1\neq \{0\},$ standard Cartan subalgebra $\hh$ and corresponding root system $R=R_0\cup R_1$ with $R\sub\dot R+\bbbz\d$; see Tables~\ref{table1}~\& \ref{table-ns} and (\ref{sdot}). We also mention that using Table \ref{table1}  and  \cite[Tables~ 2 and 5]{you10}, the following hold:
 \begin{equation}\label{exp}
\parbox{5.7in}{ \begin{itemize}
\item[(1)] For each $0\neq \dot\a\in\dot R,$ there are $r_{\dot\a}\in\{1,2,4\}$ and $0\leq k_{\dot\a}<r_{\dot\a}$ such that 
$(\dot\a+\bbbz\d)\cap R=\dot\a+k_{\dot\a}\d+r_{\dot\a}\bbbz\d$ and if $k_{\dot\a}\neq 0,$ then $k_{\dot\a}| r_{\dot\a}$. 
\item[(2)]  If  $\dot\a\in \dot R_{re}$, then  $(\dot\a+\bbbz\d)\cap R\cap R_1$ is one of the following: $\emptyset$, $\dot\a+\bbbz\d$, $\dot\a+2\bbbz\d$, 
$\dot\a+(2\bbbz+1)\d$. 
%\item[(3)] If $\dot\a\in \dot T_{ns},$ then $r_{\dot\a}\in\{1,2\}$ and $k_{\dot\a}=0$. Moreover, $\dot\a=\dot\gamma+\dot\eta$ with $\kappa \dot\gamma, \kappa \dot\eta\in\dot T_{re}$.
 \end{itemize}}  
\end{equation}

 \begin{lem}\label{M--30}
 Suppose $V$ is a zero level  simple finite weight  $\LL$-module. Assume $\varrho\in\{1,2\}$ and  $\dot\a,\dot\b\in \frac{1}{\varrho}\dot R$ with $(\dot\a,\dot\b)=0.$ If $\pm\varrho\dot\a$ are full-locally nilpotent  and $\pm\varrho\dot\b$ are either hybrid or full-locally nilpotent,  then, there are no $\mu\in\supp(V)$  and $m_s\in\bbbz$ $(s\in \bbbz^{>0})$ such that  $\mu+s(\dot\a+\dot\b)+m_s\d\in \supp(V)$ for all $s\in\bbbz^{>0}.$
  \end{lem}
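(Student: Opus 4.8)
The plan is to argue by contradiction, using full\nobreakdash-local\nobreakdash-nilpotency of $\pm\varrho\dot\a$ to collapse the $\dot\a$\nobreakdash-component of the putative string of weights, and then the information about $\pm\varrho\dot\b$ to finish. Recall first that, the hypotheses being stated in terms of the dichotomy of Theorem~\ref{hybrid-0}, $V$ has shadow; in particular $\pm\varrho\dot\b$ is never full\nobreakdash-injective. Assume, contrary to the statement, that there are $\mu\in\supp(V)$ and integers $m_s$ $(s>0)$ with $\lam_s:=\mu+s(\dot\a+\dot\b)+m_s\d\in\supp(V)$ for every $s>0$.

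First I would let $\xi$ be the even real root lying in $\bbbz\varrho\dot\a$ (namely $\varrho\dot\a$, or $2\varrho\dot\a$ when $\varrho\dot\a$ is odd; recall $\gamma\in R^{ln}(V)\Leftrightarrow 2\gamma\in R^{ln}(V)$). Since $\pm\xi$ are full\nobreakdash-locally nilpotent, the $\mathfrak{sl}_2$ generated by $\LL^{\xi}$ and $\LL^{-\xi}$ acts integrably on $V$, so $2(\nu,\xi)/(\xi,\xi)\in\bbbz$ for all $\nu\in\supp(V)$ and the $\xi$\nobreakdash-string through any $\nu\in\supp(V)$ is a solid interval reaching at least down to its reflection $r_\xi(\nu)$. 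Using $(\dot\a,\dot\b)=0$ and $(\d,\cdot)=0$ one gets $2(\lam_s,\xi)/(\xi,\xi)=c_0+(2/k)s$, where $\xi=k\dot\a$ and $c_0=2(\mu,\xi)/(\xi,\xi)\in\bbbz$; hence this is a positive integer for all large $s$ (and when $2/k\notin\bbbz$ one already gets a contradiction from $s$ of the wrong parity). Subtracting a suitable multiple of $\xi$ from $\lam_s$ then produces $\mu+\epsilon_s\dot\a+s\dot\b+m_s\d\in\supp(V)$ with $\epsilon_s\in\{0,1,\dots,k-1\}$, while also keeping the ``fan'' $\mu+t\dot\a+s\dot\b+m_s\d\in\supp(V)$ for all $t$ in an interval whose length tends to $\infty$. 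Passing to a subsequence on which $\epsilon_s$ equals a fixed $\epsilon$ and $s$ lies in a fixed residue class modulo $\varrho$, and writing $\nu:=\mu+\epsilon\dot\a$, I am reduced to: there is an infinite set $T$ of positive integers with $s\to\infty$ and $\nu+s\dot\b+m_s\d\in\supp(V)$ for all $s\in T$.

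The contradiction is then drawn from the $\dot\b$\nobreakdash-direction. If $\pm\varrho\dot\b$ are full\nobreakdash-locally nilpotent, then by Proposition~\ref{1} the rank\nobreakdash-one affine subalgebra $\mathcal{G}_\b$ in the $\dot\b$\nobreakdash-direction acts (up to a central subspace) integrably at level zero, and the resulting bound on the support in the $\dot\b$\nobreakdash-direction contradicts $\nu+s\dot\b+m_s\d\in\supp(V)$ with $s\to\infty$. If $\pm\varrho\dot\b$ are hybrid, I would apply Lemma~\ref{lem:exist functional} to the symmetric closed subset in the $\dot\b$\nobreakdash-direction to obtain a functional $\bs\zeta$, with $\bs\zeta>0$ roots locally nilpotent, $\bs\zeta<0$ roots injective, and, by hybridness, $\bs\zeta(\d)\ne0$. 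Then for every root $\eta=\varrho\dot\b+p\d$ (or $2\varrho\dot\b+p\d$) on which $\LL^{\eta}$ acts locally nilpotently, $\LL^{-\eta}$ acts injectively, and conversely; in either case suitable elements of $\LL^{\eta}$, $\LL^{-\eta}$ and $[\LL^{\eta},\LL^{-\eta}]$ generate an $\mathfrak{sl}_2$ with one generator locally nilpotent and the other injective. Running the Casimir argument from the proof of Proposition~\ref{tetra} on $U:=\sum_{j\in\bbbz}V^{\lam+j\eta}$ — whose $h$\nobreakdash-weight spaces are the finite\nobreakdash-dimensional spaces $V^{\lam+j\eta}$ — shows that no ray $\{\lam+j\eta\mid j>0\}$ with $\eta$ a locally\nobreakdash-nilpotent $\dot\b$\nobreakdash-root can meet $\supp(V)$ infinitely often, whereas for an injective $\dot\b$\nobreakdash-root $\eta$ one has injections $V^{\lam}\hookrightarrow V^{\lam+j\eta}$. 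It then suffices to show that infinitely many of the points $(s,m_s)_{s\in T}$ lie on a single line whose direction vector is an actual root of the $\dot\b$\nobreakdash-string: if that root is locally nilpotent this contradicts the first assertion, and if it is injective the Casimir argument applied to that ray yields $\dim V^{\nu'}=\infty$ for some weight $\nu'$.

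The hard part is exactly this last reduction, i.e. controlling the imaginary shifts $m_s$ in the hybrid case. The naive shortcut — project onto the $\dot\b$\nobreakdash-line, observe that $2(\nu+s\dot\b+m_s\d,\dot\b)/(\dot\b,\dot\b)$ tends to $+\infty$, and contradict a boundedness statement — is unavailable, because the $h$\nobreakdash-weight spaces of $V$ are infinite\nobreakdash-dimensional (the direction $\d$, and every direction orthogonal to $\dot\b$, collapse into one $h$\nobreakdash-eigenvalue). One has to use the sign pattern of $\bs\zeta$ on $\varrho\dot\b+\bbbz\d$, together with the ``fan'' produced along the way, to force the pairs $(s,m_s)$ into an arithmetic progression on a root line; alternatively one passes to the loop realization $\bar M\otimes\bbbc[t^{\pm1}]$ as in the proof of Proposition~\ref{tight}, making the problem $\d$\nobreakdash-periodic and hence essentially finite\nobreakdash-dimensional. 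This bookkeeping between the $\dot\b$\nobreakdash-direction and the imaginary direction is the real content of the lemma, and I expect it to be the main obstacle.
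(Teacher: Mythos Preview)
Your proposal is incomplete, and you correctly identify where: after stripping the $\dot\a$-component you are left with $\nu+s\dot\b+m_s\d\in\supp(V)$ for arbitrary $m_s$, and you have no mechanism to force those points onto a single root line. The Casimir argument from Proposition~\ref{tetra} and the loop-realization trick from Proposition~\ref{tight} do not apply here as stated, since neither gives control over an arbitrary sequence $(s,m_s)$; they need the weights to already lie along a ray in the direction of a root.

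The paper's fix is to reverse the order of operations. Because $\pm\varrho\dot\b$ is hybrid or full-locally nilpotent, there is a \emph{fixed} root $\varrho\dot\b+K\d$ (with $K=k_{\varrho\dot\b}+r_{\varrho\dot\b}m'$ for $m'\gg0$) that is locally nilpotent; for $s$ large enough the pairing $2(\lam_s,\varrho\dot\b)/(\varrho\dot\b,\varrho\dot\b)$ exceeds $s$, so (\ref{in-supp 1}) lets you subtract $\varrho\dot\b+K\d$ exactly $s$ times from $\lam_s$. This converts the arbitrary $m_s$ into the explicit $p_s=m_s-sK$, leaving $\mu_0+s\varrho\dot\a+p_s\d\in\supp(V)$. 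Now write $p_s-sk_{\varrho\dot\a}=q_sr_{\varrho\dot\a}+r_s$ with $0\le r_s<r_{\varrho\dot\a}$; since $\pm\varrho\dot\a$ is full-locally nilpotent you can strip all $s$ copies of $\varrho\dot\a$ using roots in $\varrho\dot\a+k_{\varrho\dot\a}\d+r_{\varrho\dot\a}\bbbz\d$, obtaining $\mu_0+r_s\d\in\supp(V)$. The key point is that $r_s$ is bounded, so pigeonhole gives a fixed $r$ and infinitely many $s_i$ with $(\mu_0+r\d)+s_i(\varrho\dot\a+k_{\varrho\dot\a}\d)+q_{s_i}r_{\varrho\dot\a}\d\in\supp(V)$; subtracting one copy of $\varrho\dot\a+(k_{\varrho\dot\a}+q_{s_i}r_{\varrho\dot\a})\d$ puts $(\mu_0+r\d)+(s_i-1)(\varrho\dot\a+k_{\varrho\dot\a}\d)$ in $\supp(V)$ for $s_i\to\infty$, contradicting $\varrho\dot\a+k_{\varrho\dot\a}\d\in R^{ln}(V)=B_V$. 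Note that the hybrid and full-locally nilpotent $\dot\b$ cases are handled uniformly by this single choice of $K$; no separate Casimir or loop argument is needed.
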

  \begin{proof}
 Before starting the proof, we mention that  throughout the proof,  we frequently use the following fact which is known  from $\frak{sl}_2$ and $\frak{osp}(1,2)$-module theory (see \cite{you9}).
\begin{equation}\label{in-supp 1}
\parbox{6in}{If $\mu\in\supp(V)$ and $\pm\a\in R^{ln}(V)$  with $2(\mu,\a)/(\a,\a)>0,$ then $\mu-\a\in \supp(V).$}
\end{equation}
 
We prove the lemma  by contradiction. We assume the negation of the statement and drive a contradiction  using  the fact that  simple modules have shadow in particular $R^{ln}=B_V$; see (\ref{BC}).
So, to the contrary, assume there are $\mu\in\supp(V)$ and $m_s\in\bbbz$ ($s\in\bbbz^{>0}$) such that  $\mu+s(\dot\a+\dot\b)+m_s\d\in \supp(V)$  for all $s\in\bbbz^{>0}.$  In particular, we have \[\mu_{_{\varrho s}}:=\mu+s\varrho(\dot\a+\dot\b)+m_{\varrho s}\d\in \supp(V)\quad (s\in\bbbz^{>0}).\]
  We note that $(\dot\a,\dot\b)=0$ and  pick $s_0$ such that 
  \begin{equation}\label{s_0}
  \begin{split}
   & \frac{2(\mu_{{\varrho s_0}},\varrho \dot\a)}{(\varrho \dot\a,\varrho \dot\a)}=\frac{2(\mu+ s_0\varrho(\dot\a+\dot\b),\varrho \dot\a)}{(\varrho \dot\a,\varrho \dot\a)}=\frac{2(\mu,\varrho \dot\a)}{(\varrho \dot\a,\varrho \dot\a)}+2s_0\in\bbbz^{>0}
  \andd \\
&\frac{2(\mu_{{\varrho s_0}},\varrho \dot\b)}{(\varrho \dot\b,\varrho \dot\b)}=\frac{2(\mu+ s_0\varrho(\dot\a+\dot\b),\varrho \dot\b)}{(\varrho \dot\b,\varrho \dot\b)}
=\frac{2(\mu,\varrho \dot\b)}{(\varrho \dot\b,\varrho \dot\b)}+2s_0\in\bbbz^{>0}.   
\end{split}
  \end{equation}
We have 
\[\upsilon_s:=\mu_{{\varrho s_0+\varrho s}}=\mu_{\varrho s_0}+s\varrho(\dot\a+\dot\b)+(m_{\varrho s+\varrho s_0}-m_{\varrho s_0})\d\in \supp(V)\quad (s\in\bbbz^{>0}).\]

By our assumption, either $(\pm\varrho\dot\b+s\d)\cap R\sub R^{ln}$ for large enough $s$ or $(\pm\varrho\dot\b-s\d)\cap R\sub R^{ln}$ for large enough $s$.  The proofs of  the both cases are similar, so, we just carry out the proof for  the former case. Recalling $r_{\varrho\dot\b}$ and $k_{\varrho\dot\b}$ from (\ref{exp}), for a large enough  $m'\in\bbbz^{>0},$ we have    \[\left\{\begin{array}{ll}
   \varrho\dot\b+k_{\varrho\dot\b}\d+r_{\varrho\dot\b}m'\d\in R^{ln}(V),~~ -(\varrho\dot\b+k_{\varrho\dot\b}\d+r_{\varrho\dot\b}m'\d)\in R^{in}(V)
   & \hbox{if $\pm\varrho\dot\b$ is up-nilpotent hybrid}\\
   \varrho\dot\b+k_{\varrho\dot\b}\d+r_{\varrho\dot\b}m'\d\in R^{ln}(V),~~ -(\varrho\dot\b+k_{\varrho\dot\b}\d+r_{\varrho\dot\b}m'\d)\in R^{ln}(V)
   & \hbox{if $\pm\varrho\dot\b$ is full-locally nilpotent.}
  \end{array}\right. \] 
  This, in particular,  together with (\ref{s_0}) and (\ref{in-supp 1}) and the fact that the level of $V$ is zero, implies  that 
   \begin{align*}
   \upsilon_{s}-i(\varrho\dot\b+k_{\varrho\dot\b}\d+r_{\varrho\dot\b}m'\d)\in \supp(V)\quad (s\in\bbbz^{>0},~1\leq i\leq s),
   \end{align*}
in particular, for \[p_s:=(m_{\varrho s+\varrho s_0}-m_{\varrho s_0})-s(k_{\varrho\dot\b}+r_{\varrho\dot\b}m')\quad\quad(s\in\bbbz^{>0}),\]
we have 
 \begin{align}
 \mu_{\varrho s_0}+s(\varrho\dot\a+k_{\varrho\dot\a}\d)+(p_s-sk_{\varrho\dot\a})\d=&   \mu_{\varrho s_0}+s\varrho\dot\a+p_s\d\nonumber\\
 =& \mu_{\varrho s_0}+s\varrho\dot\a+(m_{\varrho s+\varrho s_0}-m_{\varrho s_0})\d-s(k_{\varrho\dot\b}+r_{\varrho\dot\b}m')\d\label{in-supp}\\
   =&\upsilon_{s}-s(\varrho\dot\b+k_{\varrho\dot\b}\d+r_{\varrho\dot\b}m'\d)
   \in \supp(V)\quad (s\in\bbbz^{>0}).\nonumber
   \end{align}

We next note that for each $s\in\bbbz^{>0},$ using the division algorithm,  we get  $q_s\in\bbbz$ and $0\leq r_s<r_{\varrho\dot\a}$ such that 
\[p_s-sk_{\varrho\dot\a}=q_sr_{\varrho\dot\a}+r_s.\] In particular, (\ref{in-supp}) gives that 
\begin{equation} \label{in-supp V}
\begin{split}
  (\mu_{\varrho s_0}+r_s\d)+s(\varrho\dot\a+k_{\varrho\dot\a}\d)+q_sr_{\varrho\dot\a}\d=
&\mu_{\varrho s_0}+s(\varrho\dot\a+k_{\varrho\dot\a}\d)+(q_sr_{\varrho\dot\a}+r_s)\d
\\
=&\mu_{\varrho s_0}+s(\varrho\dot\a+k_{\varrho\dot\a}\d)+(p_s-sk_{\varrho\dot\a})\d\in\supp(V).
\end{split}
\end{equation}
 Since $\pm\varrho\dot\a$ is full-locally nilpotent,  using    this  together with (\ref{in-supp 1}) and (\ref{s_0}) repeatedly, we get 
\begin{align*}
   \mu_{\varrho s_0}+r_s\d\in \supp(V)\quad (s\in\bbbz^{>0}).
   \end{align*}
Since $r_s$'s are equal  for infinitely many $s$, we get positive integers $s_1,s_2,\ldots$ with
$$r:=r_{s_1}=r_{s_2}=\cdots,$$ and so contemplating (\ref{in-supp V}),  we have 
   \begin{align*}
 (  \mu_{\varrho s_0}+r\d)+s_i(\varrho\dot\a+k_{\varrho\dot\a}\d)+(q_{s_i}r_{\varrho\dot\a})\d\in \supp(V)\quad (i>0).
   \end{align*}
  Using (\ref{in-supp 1}), we get that 
\begin{align*}
 (  \mu_{\varrho s_0}+r\d)+(s_i-1)(\varrho\dot\a+k_{\varrho\dot\a}\d)\in \supp(V)\quad (i>0)
   \end{align*}
which is a contradiction as $\varrho\dot\a+k_{\varrho\dot\a}\d\in  R^{ln}(V) $ while  simple modules have shadow. 
  \end{proof}

Now, we are ready to prove our main result. We recall from  Table \ref{table-ns} that  \begin{equation*}\label{kappa-final}
 \dot R_{ns}\sub (1/\kappa )(\dot R_{re}+\dot R_{re}) \hbox{ in which } \kappa=\left\{
 \begin{array}{ll}
 2 & \hbox{if~}  X=A(2m-1,2n-1)^{(2)},\\
 1& \hbox{otherwise}.
\end{array}
\right. 
\end{equation*}
For a  simple  finite weight $\LL$-module $M$\footnote{We mention that each  simple finite weight $\LL$-module has shadow.}, set  \begin{equation}\label{T1,2}\begin{array}{ll}
T(1)_{re}:=\{\a\in R_{re}\mid \a,-\a\in R_{\rm f-nil}(M)\},\\\\
 T(2)_{re}:=\{\a\in R_{re}\mid \a,-\a\in R_{\rm hyb}(M)\},\\\\
T(i)_{ns}:=R_{ns}\cap (\frac{1}{\kappa}(\dot T(i)_{re}+\dot T(i)_{re})+\bbbz\d),\\\\
T(i):=(\underbrace{T(i)_{ns}\cup T(i)_{re}}_{T(i)^\times})\cup (\underbrace{R_{im}\cap (T(i)^\times+T(i)^\times)}_{T(i)_{im}})& (i=1,2),\end{array}\end{equation}
 as well as
\begin{equation}\label{Tre-ns}T_{re}:=T(1)_{re}\cup T(2)_{re}\andd T:=\underbrace{\bbbz\d}_{T_{im}}\cup T_{re}\cup(\underbrace{R_{ns}\cap (\frac{1}{\kappa} (\dot T_{re}+\dot T_{re})+\bbbz\d)}_{T_{ns}}).\end{equation}
 
 In what follows, by    $\gG_{_S}$ for a {symmetric closed} subsets  $S\sub R,$  we mean 
 \begin{equation*}\label{gs}
 \gG_{_S}=\Bigop{\a\in S}{}\LL^\a.
 \end{equation*}

         \begin{Thm}\label{main} 
Assume  $M$ is a zero level  simple finite weight $\LL$-module. Recall (\ref{T1,2})~$\&$ (\ref{Tre-ns}). If   $T(1)_{re}$ and $T(2)_{re}$  are  nonempty,  we have the following: 
\begin{itemize}
 \item[(i)] $T$ as well as $T(1)$ and $T(2)$ are symmetric closed subsets of $R$. Also, for $i=1,2,$ $T(i)_{im}$ is either $\bbbz\d$ or $2\bbbz\d.$ Moreover, if   $T(2)_{im}=2\bbbz\d$, then, $T(2)\sub R_0.$
\item[(ii)]   Assume $V$ is a simple $\mathcal{G}_{_T}$-submodule of $M$ and recall notations used in \S~\ref{par-subsets}.
\begin{itemize}
\item[(a)]
There exist a linear functional $\bs{\zeta_1}:\sspan_\bbbr T\longrightarrow \bbbr$ with $ {\rm ker}(\bs\zeta_1)=\bbbz\d\cup T(2)$ and a simple $ \gG_{_{T_{{\bs{\zeta_1}}}^\circ}}$-module $U$ such that 
\[M={\rm Ind}^{\gG_{_T}}_{\bs\zeta_1}(U).\] 
\item[(b)]  If $T(2)_{im}=\bbbz\d,$ there is a linear functional $\bs\zeta_2:\sspan_\bbbr T_{{\bs{\zeta_1}}}^\circ\longrightarrow \bbbr$ and a simple 
$\gG_{_{(T_{{\bs{\zeta_1}}}^\circ)_{\bs{\zeta_2}}^\circ}}$-module $N$ such that 
\[U={\rm Ind}_{\bs\zeta_2}^{\gG_{_{T_{\bs\zeta_1}^\circ}}}(N).\] 
\item[(c)]  If $T(2)_{im}=2\bbbz\d$ and $V$ has bounded weight multiplicities, then,  the intersection $\sum_{k\in\bbbz}\LL^{k\d}$ with the  center $Z(\gG_{_{T_{{\bs{\zeta_1}}}^\circ}})$ of $\gG_{_{T_{{\bs{\zeta_1}}}^\circ}}$ acts trivially on $U$ and there is a linear functional $\bs\zeta_2:\sspan_\bbbr T_{{\bs{\zeta_1}}}^\circ\longrightarrow \bbbr$ and a simple $\gG_{_{(T_{{\bs{\zeta_1}}}^\circ)_{\bs{\zeta_2}}^\circ}}$-module $N$ such that 
\[U={\rm Ind}_{\bs\zeta_2}^{\gG_{_{T_{\bs\zeta_1}^\circ}}}(N).\] 
\end{itemize}
\item[(iii)]  Suppose that  $V$ is a simple $\mathcal{G}_{_T}$-submodule of $M$. Assume  $V$ has bounded weight multiplicities if $T(2)_{im}=2\bbbz\d$. Then, there are  linear functionals  \[\bs{\zeta_1}:\sspan_\bbbr T\longrightarrow \bbbr,~\bs\zeta_2:\sspan_\bbbr T_{{\bs{\zeta_1}}}^\circ\longrightarrow \bbbr\andd \bs\zeta_3:\sspan_\bbbr \underbrace{(T_{{\bs{\zeta_1}}}^\circ)^\circ_{\bs\zeta_2}}_{T'}\longrightarrow \bbbr\] 
such that $T'=(T^\circ_{_{\bs\zeta_1}})_{_{\bs\zeta_2}}^\circ$ is finite, and   a  cuspidal module $\Omega$ over the finite dimensional superalgebra $\gG_{_{T'}}=\Bigop{\a\in T'}{}\LL^\a$
such that 
  $M={\rm Ind}_{_{\bs\zeta_1,\bs\zeta_2}}^{\gG_{_T}}{\rm Ind}_{_{\bs\zeta_3}}^{\gG_{_{T'}}}(\Omega).$ 
\end{itemize}
\end{Thm}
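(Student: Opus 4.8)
The plan is to treat (i), (ii) and (iii) in turn, (iii) being a formal consequence of (ii) together with the finite dimensional results of \cite{Fer,D-M-P}. Symmetry of $T(1)_{re},T(2)_{re}$ and of $T(i)_{ns},T_{ns},T(i)_{im}$ is immediate from the definitions. For closedness one uses that $M$ has shadow, so $R^{ln}(M)=B_M\cap R_{re}$ and $R^{in}(M)=C_M\cap R_{re}$, together with Theorem~\ref{property}: if $\a,\b\in T(1)_{re}$ and $\a+\b\in R_{re}$, then inside the affine Lie algebra (up to a central subspace, by Proposition~\ref{1}(b)) spanned by the $\d$-strings of $\a,\b,\a+\b$, both $\a+\b$ and $-(\a+\b)$ act locally nilpotently on $V$, and by Theorem~\ref{property} this excludes the full-injective and the two hybrid alternatives, forcing $(\a+\b+\bbbz\d)\cap R\subseteq R^{ln}$, i.e. $\a+\b\in T(1)_{re}$; the same kind of argument, now invoking Lemma~\ref{lem:up and down}, handles $T(2)_{re}$, and in particular shows that $\dot T_0$ is a finite root system with $\dot T_0=\dot T(1)_0\sqcup\dot T(2)_0$ an orthogonal union of irreducible components. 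Lemma~\ref{M--30} is precisely what controls the ``mixed'' nonsingular sums (one summand full-locally nilpotent, the other hybrid or full-locally nilpotent), giving closedness of $T(i)_{ns}$ and $T_{ns}$. The dichotomy $T(i)_{im}\in\{\bbbz\d,2\bbbz\d\}$ and the implication $T(2)_{im}=2\bbbz\d\Rightarrow T(2)\subseteq R_0$ then follow from a type-by-type inspection of Tables~\ref{table1},~\ref{table-ns} and (\ref{exp}): any real or nonsingular direction in $T(i)$ with $r_{\dot\a}=1$ gives $T(i)_{im}=\bbbz\d$, while the only directions with $r_{\dot\a}=2$ that can occur in $T(2)$ are the even roots $2\ep_i,2\d_p,\ep_i\pm\ep_j,\d_p\pm\d_q$ and the nonsingular $\ep_i\pm\d_p$, so $T(2)_{im}=2\bbbz\d$ entails $T(2)\subseteq R_0$.

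\textbf{Part (ii)(a): the first parabolic induction.} Restrict $V$ to $\gG_{T(1)}$, which by (i) satisfies the hypotheses of Proposition~\ref{1}; all its real roots act locally nilpotently on $V$ and $c$ acts trivially since $M$ has level zero. Choose $\bs{\zeta_1}$ on $\sspan_\bbbr T$ positive on a base $\D$ of $\dot T(1)_0$, identically zero on $T(2)\cup\bbbz\d$, and with $\ker\bs{\zeta_1}\cap T=\bbbz\d\cup T(2)$; such a $\bs{\zeta_1}$ exists because, by (i), $\dot T(1)_0$ and $\dot T(2)_0$ are orthogonal unions of irreducible components of $\dot T_0$. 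By Proposition~\ref{1}(c), together with the fact that a real odd root acts locally nilpotently iff its double does, there are $\mu\in\supp(V)$ and $0\neq v\in V^\mu$ annihilated by the real root vectors in $\gG^+_{T,\bs{\zeta_1}}$; the remaining (nonsingular) root vectors there are killed, after possibly replacing $v$, using Lemma~\ref{M--30}, (\ref{in-supp 1}) and the $\mathfrak{osp}(1,2)$-string manipulations from the proof of Lemma~\ref{lem:exist functional}. Thus $V^{\gG^+_{T,\bs{\zeta_1}}}\neq\{0\}$ and Proposition~\ref{ind} yields $M={\rm Ind}^{\gG_T}_{\bs{\zeta_1}}(U)$ with $U:=M^{\gG^+_{T,\bs{\zeta_1}}}$ a simple finite weight $\gG_{T^\circ_{\bs{\zeta_1}}}$-module.

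\textbf{Part (ii)(b),(c): the second parabolic induction.} Work inside $\gG^\circ:=\gG_{T^\circ_{\bs{\zeta_1}}}$, which after splitting off the central space of Proposition~\ref{1}(b) equals $\scf(S)_{cc}$ with $\gG_{T(2)}$ as its hybrid real part. If $T(2)_{im}=\bbbz\d$, then Lemmas~\ref{lem:up and down} and~\ref{lem:exist functional} (applied to $M$) furnish $\bs{\zeta_2}$ on $\sspan_\bbbr T^\circ_{\bs{\zeta_1}}$, an argument analogous to Proposition~\ref{1}(c) in the hybrid case produces a nonzero vector of $U$ killed by $(\gG^\circ)^+_{\bs{\zeta_2}}$, and Proposition~\ref{ind} gives $U={\rm Ind}^{\gG^\circ}_{\bs{\zeta_2}}(N)$. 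If $T(2)_{im}=2\bbbz\d$, then $(S+(2\bbbz+1)\d)\cap R=\emptyset$, so by (\ref{zfc}) the $\bbbz$-graded centre of $\gG^\circ$ is nonzero and one must first show that $\sum_{k\in\bbbz}\LL^{k\d}\cap Z(\gG^\circ)$ acts trivially on $U$. For this, extend $U$ to a simple $\scl=\scf(S)_{cc}\rtimes\bbbc d$-module using the $d$-action inherited from $M$; were a nonzero homogeneous $z\in\LL^{r\d}\cap Z(\gG^\circ)$ with $r\neq0$ to act nontrivially on $U$, Proposition~\ref{tight} would force every $\a\in S_{re}$ to be full-locally nilpotent or full-injective on $U$, hence on $M$, contradicting $\a\in T(2)_{re}\subseteq R_{\rm hyb}(M)$; granting triviality, case (c) then concludes exactly as case (b). \emph{This central-triviality step is the main obstacle}: one must understand the simple $\scf(S)_{cc}\rtimes\bbbc d$-module $U$ precisely enough to invoke Proposition~\ref{tight}, and in type $A(2m,2n)^{(4)}$ the degree-$\d$ part of $\gG^\circ$ carries the quadratic Lie superalgebra $\mathcal{Q}$ and the abelian piece $\mathscr{K}$ of Remark~\ref{rem-MMM}, which is why the bounded weight multiplicity hypothesis on $V$ is needed there.

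\textbf{Part (iii): the third induction and the conclusion.} Put $T':=(T^\circ_{\bs{\zeta_1}})^\circ_{\bs{\zeta_2}}$. Since $\bs{\zeta_2}$ is produced by Lemma~\ref{lem:exist functional} with $\bbbz^{\geq0}\d\cap S$ in its positive part, $\bs{\zeta_2}(\d)>0$, so $T'$ has no nonzero imaginary root; a closed symmetric subset of the affine root system whose only imaginary root is $0$ is finite, hence $\gG_{T'}$ is finite dimensional with reductive even part. Combining (ii)(a) with (ii)(b)/(c) and the transitivity of parabolic induction (the $P_{\lambda,\mu}$ formalism of \S\ref{par-subsets}) gives $M={\rm Ind}^{\gG_T}_{\bs{\zeta_1},\bs{\zeta_2}}(N)$ with $N$ a simple finite weight $\gG_{T'}$-module; by \cite{Fer,D-M-P}, $N={\rm Ind}^{\gG_{T'}}_{\bs{\zeta_3}}(\Omega)$ for a functional $\bs{\zeta_3}$ on $\sspan_\bbbr T'$ and a cuspidal module $\Omega$. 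Substituting yields $M={\rm Ind}^{\gG_T}_{\bs{\zeta_1},\bs{\zeta_2}}\,{\rm Ind}^{\gG_{T'}}_{\bs{\zeta_3}}(\Omega)$, the required form.
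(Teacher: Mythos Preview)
Your outline follows the paper's approach closely and identifies the right ingredients: Proposition~\ref{1}(c) for the first induction, Lemmas~\ref{lem:up and down} and~\ref{lem:exist functional} for~$\bs\zeta_2$, Proposition~\ref{tight} for central triviality, and \cite{D-M-P} for the finite-dimensional step. Your deduction of central triviality in (ii)(c) from Proposition~\ref{tight} (nontrivial central action would force every $\a\in T(2)_{re}$ to be full-locally nilpotent or full-injective, contradicting hybridity) is exactly what the paper does implicitly.

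Two steps, however, are underdeveloped to the point of being gaps. In (ii)(a), Proposition~\ref{1}(c) only kills the \emph{even} root vectors in $\gG^+_{T,\bs\zeta_1}$; the remaining root spaces with $\bs\zeta_1>0$ are all odd (real odd or nonsingular), and ``possibly replacing $v$'' does not by itself dispose of them. The paper runs a genuine minimality argument: for each weight vector $u\in W$ one sets $\aa_u=\{\dot\a\mid \bs\zeta_1(\dot\a)>0,\ \LL^{\dot\a+r\d}u\neq0\ \text{for some }r\}$, picks $u_0$ with $|\aa_{u_0}|$ minimal, and assuming $\aa_{u_0}\neq\emptyset$ chooses $\dot\a_*$ of maximal $\bs\zeta_1$-value to build an infinite string $\mu_0+r\dot\a_*+m_r\d\in\supp(V)$. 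Lemma~\ref{M--30} then forces $\dot\a_*\in\dot T(1)_{re}$, and the shadow axiom $R^{ln}=B_V\cap R_{re}$ together with the parity pattern (\ref{exp})(2) yields the contradiction. Your sketch does not supply this mechanism. (Also, Lemma~\ref{M--30} plays no role in part~(i); closedness there is purely the additivity of $R^{ln}$ and $R^{in}$ from \cite[Theorem 4.7, Lemma 3.5]{you8}.)

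In (ii)(b)/(c), the nonvanishing of $N=\{v\in U\mid \LL^\a v=0,\ \bs\zeta_2(\a)>0\}$ is not obtained by ``an argument analogous to Proposition~\ref{1}(c) in the hybrid case'': there is no integrable hypothesis here. The paper first invokes \cite[Lemma 5.1]{you8} to find $\lam\in\supp(U)$ with $(\lam+p\bbbz^{>0}\d)\cap\supp(U)=\emptyset$, and then the argument of \cite[Theorem~5.8]{you8} to produce the required annihilated vector. In case~(c), after Proposition~\ref{tight} gives triviality of $Z(\scf_{cc})\cap\sum_{k\neq0}\LL^{k\d}$, the paper still distinguishes $X\neq A(2m,2n)^{(4)}$ (where $\bigoplus_k\LL^{(2k+1)\d}U=0$ directly) from $X=A(2m,2n)^{(4)}$ (where one first passes to $U'=\{u\in U\mid \LL^{(2k+1)\d}u=0\}\neq0$ via \cite[Proposition~3.2]{KY}, which is where the bounded-multiplicity hypothesis is actually consumed). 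You acknowledge this complication but do not carry it out.
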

    \begin{rem}
\rm{Recalling  (\ref{r12}),  by setting  $T(i):=R(i)$ ($i=1,2$) in Theorem~\ref{main},
we get  a complete characterization of quasi-integrable $\LL$-modules, at the critical level (i.e., the zero level).}    \end{rem} 
\begin{proof}[{{Proof of Theorem~\ref{main}}}]
(i)   is easy to see using the fact that for $\a,\b\in R_{re}$ with $\a+\b\in R_{re},$ if $\a,\b\in R^{ln}$ (resp. $\a,\b\in R^{in}$), then  $\a+\b\in R^{ln}$ (resp. $\a+\b\in R^{in}$); see \cite[Theorem 4.7, Lemma 3.5]{you13}. For the last assertion, if $T(2)_{im}=2\bbbz\d,$  $\dot T(2)$ does not contain nonzero elements $\dot\a$ with $\dot\a+\bbbz\d\sub T(2)$. Therefore,  $T(2)\sub R_0;$ see Table~\ref{table1}.

(ii)(a) Set $Q:=R_0\cap T(1).$
Then,  \[\dot Q_0=\{\dot\a\in \dot R\mid Q\cap (\dot\a+\bbbz\d)\neq \emptyset\}\] is a finite root system. Fix a base $\D$ of $\dot Q_0$ 
and  define a linear functional  $\bs{\zeta_1}$ on $\sspan_\bbbr \dot T(1)_{re}=\sspan_\bbbr \dot Q_0$ with $\bs{\zeta_1}(\D)\sub\bbbr^{> 0}$ and $\bs\zeta_1(\dot \a)\neq 0$ for all $\dot\a\in \dot T(1)_{ns}$\footnote{We note that $\dot Q_0$ is a direct sum of orthogonal finite root systems, in particular, if $(\dot T(1))_{ns}\ne \emptyset,$ each nonsingular root of $\dot T(1)$ is a sum of two elements of real spans of two different irreducible components of $\dot Q_0.$}. Extend $\bs{\zeta_1}$ to a linear functional on $\sspan_\bbbr T$ with 
\begin{equation*}\label{extend-T}\bs{\zeta_1}(\a)=0\quad(\forall \a\in T(2)\cup\bbbz\d).
\end{equation*}
We have $T=T_{\bs{\zeta_1}}^+\uplus T_{\bs{\zeta_1}}^\circ\uplus T_{\bs{\zeta_1}}^-$ with
\begin{align*}\label{T-zeta-1}
T_{\bs{\zeta_1}}^{\pm}=&\{\dot\b+r\d\in T(1)\mid \bs{\zeta_1}(\dot\b)\gtrless0\}\\
\cup& \{\frac{1}{\kappa}(\dot\b+\dot\gamma)+r\d\in T\mid  \dot\b\in \dot T(1)_{re}, \dot\gamma\in \dot T(2)_{re},r\in\bbbz,\bs{\zeta_1}(\dot\b)\gtrless0\}.\nonumber
\end{align*}
We have $T^\circ_{\bs{\zeta_1}}=T(2)\cup\bbbz\d$ and shall show  $U:=V^{(\gG_{_T})^+_{\bs{\zeta_1}}}=\{v\in V\mid \LL^\a v=\{0\}~~(\forall \a\in T^+_{_{\bs\zeta_1}})\}$ is a nonzero module over 
\[\gG_{T^\circ_{\bs\zeta_1}}=\Bigop{\a\in \bbbz\d\cup T(2)}{}\LL^\a=\sum_{k\in \bbbz}\LL^{(2k+1)\d}+\gG_{_{T(2)}}.\]

We just need to show that $V^{(\gG_{_T})^+_{\bs{\zeta_1}}}\neq \{0\}.$
Consider  $V$ as a module over  $\gG_{_{T_1}}=\Bigop{\a\in T_1}{}\LL^\a$ and use  Proposition~\ref{1}(c) to get that the set 
\begin{equation}\label{w}
	\begin{split}
W:=&\{v\in V\mid \LL^{\dot \a+r\d} v=\{0\}\quad(\dot\a\in \dot Q_0^+(\D),~r\in\bbbz, \dot\a+r\d\in R_0)\}\\
=&\{v\in V\mid \LL^{\dot \a+r\d} v=\{0\}\quad(\dot\a\in \dot T(1)_{re},~r\in\bbbz, \dot\a+r\d\in R_0\cap T(1),~\bs{\zeta_1}(\dot\a)>0)\}
\end{split}
\end{equation}
is a nonzero vector space. For each nonzero weight vector $u\in W,$ set
\[\aa_u:=\{\dot\a\in \dot T\setminus \dot T(2)\mid  \bs{\zeta_1}(\dot\a)>0,\exists r\in \bbbz\hbox{ s.t. }~\LL^{\dot\a+r\d}u\neq \{0\}\} .\]  We note that 
\begin{equation}\label{in R1}
(\aa_u+\bbbz\d)\cap R\sub R_1.
\end{equation}
To complete the proof, we need to show that there is $u_0\in W$ with $\aa_{u_0}=\emptyset.$ To this end, assume 
$u_0$ is a nonzero weight vector in $W$ (see (\ref{w})) such that $\aa_{u_0}$ is of minimum cardinality.  We claim that $\aa_{u_0}=\emptyset.$  If to the contrary,    $\aa_{u_0}\not=\emptyset,$ we fix    $\dot\a_*\in\aa_{u_0}$  such that   \[\bs{\zeta}_1(\dot\a_*)=
   {\rm max}\{\bs{\zeta}_1(\dot\a)\mid \dot\a\in \aa_{u_0}\}\] as well as  $m_0\in\bbbz$ with $\LL^{\dot\a_*+m_0\d}u_0\neq \{0\}$  and $0\neq u_1\in \LL^{\dot\a_*+m_0\d}u_0$. For $\dot\a\in \dot T$  with   $\bs{\zeta_1}(\dot\a)>0$ and  $r\in\bbbz$, we have
 \begin{align*}
 \LL^{\dot\a+r\d}u_1\sub \LL^{\dot\a+r\d}\LL^{\dot\a_*+m_0\d}u_0\sub& [\LL^{\dot\a+r\d},\LL^{\dot\a_*+m_0\d}]u_0+\LL^{\dot\a_*+m_0\d}\LL^{\dot\a+r\d}u_0\\
\sub&  \LL^{\dot\a+\dot\a_*+(r+m_0)\d}u_0+\LL^{\dot\a_*+m_0\d}\LL^{\dot\a+r\d}u_0= \LL^{\dot\a_*+m_0\d}\LL^{\dot\a+r\d}u_0;
\end{align*}  
the last equality is due to the choice of $\dot\a_*.$ This  easily implies that 
   \begin{itemize}
   \item[(1)]
   $u_1\in W$,
   \item[(2)] $\aa_{u_1}=\aa_{u_0}.$  
   \end{itemize}
Therefore, $ \LL^{\dot\a_*+m_1\d}u_1\neq \{0\}$ for some $m_1\in\bbbz;$ we pick $0\neq u_2\in \LL^{\dot\a_*+m_1\d}u_1$. Continuing this process, we get 
$m_i\in\bbbz$  and  nonzero weight vectors $u_i$ $(i\geq 0)$ such that \[\dot\a_{*}+m_0\d,\dot\a_{*}+m_i\d\in R_1\andd 0\neq u_i\in \LL^{\dot\a_{*}+m_{i-1}\d}u_{i-1}\quad (i\geq 1).\]
Assume  $\mu_0$ is  the weight of $u_0$. We have 
 \begin{equation*}\label{mur}
 \mu_r:=\mu_0+r\dot\a_*+(m_0+\cdots+m_{r-1})\d\in \supp(V)\quad\quad(r\in\bbbz^{>0}).
 \end{equation*}
 By Lemma~\ref{M--30},
 $\dot\a_*$ is a real root. So, we have $\dot\a_*\in \dot T(1)_{re}$.
 Pick $r_0\in\bbbz^{>0}$ with 
 \[2(\mu_{r_0},\dot\a_*)/(\dot\a_*,\dot\a_*)=2(\mu_0+r_0\dot\a_*,\dot\a_*)/(\dot\a_*,\dot\a_*)>0.\] We have 
 \[\mu_{r_0+s}=\mu_{r_0}+s\dot\a_*+(m_{r_0}+\cdots+m_{r_0+s-1})\d\in\supp(V)\quad (s\in\bbbz^{>0}).\] 
Contemplating (\ref{in R1}), for each $i\geq 0$, $\dot\a_*+m_i\d\in R_1$. So, by (\ref{exp})(2), 
for each odd positive integer $s$, we have 
 $\dot\a_*+(-(s-2)m_{r_0}+m_{r_0+1}+\cdots+m_{r_0+s-1})\d\in T(1)$ and so,  we get using (\ref{in-supp 1}) and the fact that the level of $M$ is zero that 
\begin{align*}
 &\mu_{r_0}+(s-1)(\dot\a_*+m_{r_0}\d)\\
 =~&\mu_{r_0}+s\dot\a_*+(m_{r_0}+\cdots+m_{r_0+s-1})\d-(\dot\a_*+(-(s-2)m_{r_0}+m_{r_0+1}+\cdots+m_{r_0+s-1})\d)\\
 \in~ &\supp(V)\quad\quad (s\in 2\bbbz^{\geq 0}+1).
\end{align*} 
 This contradicts the fact that  $\dot\a_*+m_{r_0}\d\in T(1)_{re}\subseteq R^{ln}(V)$; see (\ref{BC}). Therefore, $\aa_{u_0}=\emptyset$ and so 
 \begin{equation}\label{U-MM}
 \parbox{5.5in}{$U=V^{(\gG_{_T})^+_{\bs{\zeta_1}}}=\{v\in V\mid \LL^\a v=\{0\}~~(\forall \a\in T^+_{_{\bs\zeta_1}})\}$ is a nonzero $\gG_{T^\circ_{\bs\zeta_1}}$-module.}
 \end{equation}
So, by Proposition~\ref{ind}, $U$ is a simple module over $\gG_{T^\circ_{\bs\zeta_1}}$ and $M=\rm Ind_{\bs\zeta_1}^{\gG_{_T}}(U).$
\smallskip

(ii) Using Lemma~\ref{lem:up and down},  either all real roots of $T(2)$ are up-nilpotent hybrid or all real roots of $T(2)$ are down-nilpotent hybrid. Assume $\sg=1$ if  all real roots of $T(2)$ are up-nilpotent hybrid and assume $\sg=-1$ otherwise.
 One sees that 
\[ P_2:=T(2)_{re}^{ln}\cup -T(2)_{re}^{in}\cup  (\sg\bbbz^{\geq0}\d\cap T(2))\]
satisfies 
\[(P_2+P_2)\cap (T(2)\setminus R_{ns})\sub P_2\andd P_2\cup-P_2= T(2)\setminus R_{ns},\] that is,
$P_2$ is a parabolic subset of $T(2)\setminus R_{ns}.$
Moreover, by Lemma~\ref{lem:exist functional}, there is a linear functional $\bs{\zeta_2}$ on $\sspan_\bbbr (T(2)\setminus R_{ns})=\sspan_\bbbr T(2)$ such that 
\[P_2=\{\a\in T(2)\setminus R_{ns}\mid \bs{\zeta_2}(\a)\geq 0\}.\]
By Proposition~\ref{1}(b), there is  $K\sub T(2)\cap R_0$ such that 
$$\ft:=\sum_{\a\in \{0\}\cup K^\times}\LL^\a+\sum_{\a,\b\in  K^\times}[\LL^\a,\LL^\b]$$
is an affine Lie algebra, up to a central space.  We have  $K=K^+\cup K^\circ\cup K^-$ in which 
\[K^\circ:=\{\a\in K\mid \bs{\zeta_2}(\a)=0\}\andd  K^\pm:=\{\a\in K\mid \bs{\zeta_2}(\a)\gtrless0\}.\] 
Moreover, $U$ is a finite weight $\ft$-module and
\[K^+\cap K_{re}\sub K^{ln}(U)\andd K^-\cap K_{re}\sub K^{in}(U).\] This implies that there is $\lam\in \supp(U)$  and a positive integer $p$ such that 
$\lam+p\bbbz^{>0}\d\cap \supp(U)=\emptyset$; see \cite[Lemma 5.1]{you8}. Using the same argument as in \cite[Theorem 5.8]{you8} for the finite weight module $U$, as a module over  $\gG_{T(2)},$ we have \begin{equation}\label{udot}
N:=\{v\in U\mid \LL^\a v=\{0\}  (\a\in T(2), \bs\zeta_2(\a)>0)\}
\end{equation} is nonzero. Now, we can prove (ii)(b),(c):

(ii)(b) Since $T(2)_{im}=\bbbz\d,$ we have
\begin{equation*}
N=\{v\in U\mid \LL^\a v=\{0\}~~(\a\in T(2), \bs\zeta_2(\a)>0)\}=\{v\in U\mid \LL^\a v=\{0\} ~~(\a\in T^\circ_{\bs\zeta_1}, \bs\zeta_2(\a)>0)\}
\end{equation*} and so, using Proposition~\ref{ind}, we have $U={\rm Ind}_{\bs\zeta_2}^{\gG_{_{T_{\bs\zeta_1}^\circ}}}(N).$
\begin{comment}
Note that in \cite[Theorem~5.8]{you8}, we use \cite[Proposition~3.7]{you8} in which we use (3.16) but when we use (3.16) in Claim~3, we do not need $k_{\dot\a_*}=0,$ we just need $2k_{\dot\a_*}\in r_{\dot\a_*}\bbbz\d.$
\end{comment}

(ii)(c)  Recall  Remark~\ref{rem-MMM}, specially $\mathscr{K}$, $\mathcal{I}$ and $x_{4k+2}, y_{4k+2}$'s ($k\in\bbbz$). Set
\begin{equation*}\label{e1}
\sce:=\left\{
\begin{array}{ll}
\Bigop{k\in\bbbz}{}\LL^{(2k+1)\d}& X\neq A(2m,2n)^{(4)},\\
\Bigop{k\in\bbbz}{}\LL^{(2k+1)\d}\op\Bigop{k\in\bbbz}{}\bbbc x_{4k+2}\op\Bigop{k\in\bbbz}{}\bbbc y_{4k+2} & X= A(2m,2n)^{(4)},
\end{array}
\right.
\end{equation*}
and 
\begin{equation*}\label{f1}
\scf:=\left\{
\begin{array}{ll}
\fh\op\bbbc c\op\sce\op\displaystyle{\sum_{\a\in T(2)^\times}\LL^\a}\op\Bigop{0\neq k\in\bbbz}{}\LL^{2k\d}& X\neq A(2m,2n)^{(4)},\\
\fh\op\bbbc c\op\sce\op \displaystyle{\sum_{\a\in T(2)^\times}\LL^\a}\op\mathscr{K}\op((1-\d_{m,n})\ii)\ot t^2\bbbc[t^{\pm2}])& X= A(2m,2n)^{(4)}.
\end{array}
\right.
\end{equation*}
We have 
\[\scf_{cc}\rtimes \bbbc d=\gG_{T^\circ_{\bs\zeta_1}}/\bbbc c.\]
Since the level of $V$ is zero, the simple $\gG_{T^\circ_{\bs\zeta_1}}$-module  $U$ (see (\ref{U-MM})) is a simple module over $\gG_{T^\circ_{\bs\zeta_1}}/\bbbc c=\scf_{cc}\rtimes \bbbc d$ and so Proposition~\ref{tight} implies that 
\begin{equation}\label{zc}
\parbox{2.6in}{
$Z(\scf_{cc})\cap \Bigop{0\neq k\in \bbbz}{}\LL^{k\d}$ acts trivially on $U$.}
\end{equation}
Now we address the following  cases $X\neq A(2m,2n)^{(4)}$ and $X= A(2m,2n)^{(4)}$ separately:

\noindent $\bullet$ $X\neq A(2m,2n)^{(4)}.$ Contemplating (\ref{zc}), we have $\Bigop{k\in\bbbz}{}\LL^{(2k+1)\d} U=\{0\}$
and so recalling (\ref{udot}), we have \begin{equation*}
N=\{v\in U\mid \LL^\a v=\{0\}~~(\a\in T(2), \bs\zeta_2(\a)>0)\}=\{v\in U\mid \LL^\a v=\{0\} ~~(\a\in T^\circ_{\bs\zeta_1}, \bs\zeta_2(\a)>0)\}.
\end{equation*} Therefore, using Proposition~\ref{ind}, $N$ is a simple $\gG_{(T_{\bs{\zeta}_1}^\circ)^\circ_{\bs\zeta_2}}$-module and that  $U={\rm Ind}_{\bs\zeta_2}^{\gG_{T^\circ_{\bs\zeta_1}}}(N).$

\noindent $\bullet$ $X=A(2m,2n)^{(4)}.$ Recalling (\ref{zc}) and (\ref{y's}), we have $\Bigop{k\in\bbbz}{}\bbbc y_{4k+2}U=\{0\}$. We also have $\Bigop{k\in\bbbz}{}\LL^{(2k+1)\d}U\sub U.$ 
So, using  the same argument as in {\cite[Proposition~3.2]{KY}}, we get that 
\[U':=\{u\in U\mid \LL^{(2k+1)\d}u=0\quad(k\in\bbbz)\}\] is nonzero. We have that $U'$ is a $\gG_{_{T(2)}}$-module and as above we get that 
{\small \[N'=\{v\in U'\mid \LL^\a v=\{0\}~~(\a\in T(2), \bs\zeta_2(\a)>0)\}\sub \{v\in U\mid \LL^\a v=\{0\} ~~(\a\in T^\circ_{\bs\zeta_1}, \bs\zeta_2(\a)>0)\}=N\]} is nonzero.
 Therefore, using Proposition~\ref{ind}, $N$ is a simple $\gG_{(T_{\bs{\zeta}_1}^\circ)^\circ_{\bs\zeta_2}}$-module and that  $U={\rm Ind}_{\bs\zeta_2}^{\gG_{T^\circ_{\bs\zeta_1}}}(N).$

(iii) Since  $(T^\circ_{_{\bs\zeta_1}})_{_{\bs\zeta_2}}^\circ$ is finite, the result follows from \cite{D-M-P}.
 \end{proof}

\end{document}